\date{\today}
\theoremstyle{plain}
\newtheorem{thm}{Theorem}[section]
\newtheorem{lem}[thm]{Lemma}
\theoremstyle{definition}
\theoremstyle{remark}
\newtheorem{rem}{Remark}[section]
\numberwithin{equation}{section}
\renewcommand{\theequation}{\thesection.\arabic{equation}}
\begin{document}
\allowdisplaybreaks

\title[quasineutral  limit of the electro-diffusion model]
{Quasineutral  limit of the electro-diffusion model arising in
Electrohydrodynamics}

 \author{Fucai Li}
\address{Department  of Mathematics,  Nanjing University, Nanjing 210093, P.R. China}
 \email{fli@nju.edu.cn}



\keywords{ electro-diffusion model, Nernst-Planck-Poisson system,
incompressible Navier-stokes equations, quasineutral limit, weighted energy functional}

\subjclass[2000]{ 35B25, 35B40, 35Q30, 76W05}

\begin{abstract}
 The electro-diffusion model, which arises in
electrohydrodynamics, is a coupling between the
Nernst-Planck-Poisson system and the incompressible Navier-Stokes
equations.  For the generally smooth doping profile, the
quasineutral limit (zero-Debye-length limit) is justified rigorously
in Sobolev norm uniformly in time.  The proof is based on the
elaborate energy analysis and the key
point is to establish the uniform estimates with respect to the
scaled Debye length.
\end{abstract}
\maketitle

\renewcommand{\theequation}{\thesection.\arabic{equation}}
\setcounter{equation}{0}
\section{Introduction and Main Results} \label{S1}
In this paper we consider a   model describing ionic concentrations,
electric potential, and velocity field in an electrolytic solution.
This model is a coupling between the Nernst-Planck-Poisson system
and the Navier-Stokes equations \cite{R90,Ro,CF,J02}. The (rescaled)
system takes the form
\begin{align}
 & n^\lambda_t=\text{div}(\nabla n^\lambda -n^\lambda
  \nabla\Phi^\lambda -n^\lambda v^\lambda), \label{ea1}\\
  & p^\lambda_t=\text{div}(\nabla p^\lambda +p^\lambda
  \nabla\Phi^\lambda -p^\lambda v^\lambda),\label{ea2}\\
&\lambda^2\Delta \Phi^\lambda =n^\lambda -p^\lambda
-D(x),\label{ea3}\\
&v^\lambda_t+v^\lambda\cdot \nabla v^\lambda +\nabla \pi^\lambda-\mu
\Delta v^\lambda
=(n^\lambda -p^\lambda)\nabla\Phi^\lambda, \label{ea4}\\
& \text{div}v^\lambda =0   \label{ea5}
\end{align}
with initial data
\begin{equation}\label{ea6}
   n^\lambda(x,0)=n^\lambda_0(x), \quad
   p^\lambda(x,0)=p^\lambda_0(x), \quad
   v^\lambda(x,0)=v^\lambda_0(x), \ \ \ x\in \mathbb{T}^3,
\end{equation}
where $\mathbb{T}^3 $ is the periodic  domain in $\mathbb{R}^3$,
 ${n}^\lambda$ and $p^\lambda$ denote the negative and positive
charges respectively, $\Phi^\lambda$  the electric field,
$v^\lambda$   the velocity of the electrolyte, and $\pi^\lambda$
  the fluid pressure. The parameter $\lambda>0$ denotes the
scaled Debye length and $\mu>0$  the dynamic viscosity.
$D(x)$ is a given function and models the doping profile.

 Usually in electrolytes the
Debye length is much smaller compared the others quantities, and the
electrolytes is almost electrically neutral. Under the assumption of
space charge neutrality, i.e. $\lambda =0$,  we formally arrive at
the following quasineutral Nernst-Planck-Navier-Stokes system
\begin{align}
 & n_t=\text{div}(\nabla n+n
  \mathcal{E} -nv), \label{ea7}\\
  & p_t=\text{div}(\nabla p -p\,
  \mathcal{E} -pv), \label{ea8}\\
&n-p-D(x)=0,\label{ea9}\\
&v_t+v\cdot \nabla v+\nabla \pi -\mu\Delta v
=-(n  -p )\mathcal{E},  \label{ea10}\\
& \text{div}v =0,  \label{ea11}
\end{align}
where we assume that the  limits $n^\lambda \rightarrow n$, $
p^\lambda \rightarrow p$, $ v^\lambda \rightarrow v$,
$-\nabla\Phi^\lambda \equiv E^\lambda \rightarrow \mathcal{E}$ exist
as $\lambda\rightarrow 0^+$.

The purpose of this paper is to justify the above  limit rigorously
for sufficiently smooth solutions to the system
\eqref{ea1}-\eqref{ea5}.

Since the incompressible Navier-Stokes equations
\eqref{ea4}-\eqref{ea5} are  involved in the system
\eqref{ea1}-\eqref{ea5}, it is well known that whether the global
classical solution for general initial data exists or not is open
for three spatial dimensional case and only local classic solution
is available. For example, in \cite{J02}, Jerome studied the Cauchy
problem of  the system \eqref{ea1}-\eqref{ea5} and established the
local existence of   unique smooth solution for smooth initial data.
The local existence of   unique smooth solution to the
incompressible Navier-Stokes equations can be obtained by standard
method, see \cite{L69,T01}.

  The local existence of   unique smooth solution to the limiting system
\eqref{ea7}-\eqref{ea11}
with initial smooth data
\begin{equation} \label{ea122}
n(x,t=0)=n_0(x),\ \  p(x,t=0)= p_0(x), \ \  v(x,t=0)= v_0(x)
\end{equation}
can be obtained by the similar arguments to
those stated in \cite{J02}. Since we are interested in the
quasineutral limit of the system \eqref{ea1}-\eqref{ea5}, we omit
the detail here.

In this paper we assume that the doping profile is a smooth
(sign-changing) function and the initial data $n^\lambda_0(x),
 p^\lambda_0(x)$ and $ v^\lambda_0(x)$ are smooth functions satisfying
 \begin{equation}\label{ea123}
   \int (n^\lambda_0(x)-
 p^\lambda_0(x)-D(x)) dx =0, \quad  \int  v^\lambda_0(x)dx=0.
 \end{equation}

The  main result of this paper can be stated as follows:

\begin{thm}\label{th}
Let $(n^\lambda, p^\lambda, E^\lambda, v^\lambda)$,
$E^\lambda=-\nabla \Phi^\lambda$ be the unique local smooth solution
to the system \eqref{ea1}-\eqref{ea5} with initial data \eqref{ea6}
on $\mathbb{T}^3\times [0,T_*)$ for some $0<T_*\leq\infty$. Let
$(n,p,\mathcal{E},v), \mathcal{E}=-\nabla \Phi$ be the unique smooth
solution to the limiting system  \eqref{ea7}-\eqref{ea11} with
initial data \eqref{ea122} on $\mathbb{T}^3\times [0,T_0)$ for some
$0<  T_0\leq  +\infty$  satisfying  $n+p\geq \kappa_0>0$, where
$\kappa_0$ is a positive constant. Suppose  that initial data
satisfy \eqref{ea123} and
\begin{equation}\label{ea26}
  n^\lambda_0(x)=n_0(x), \quad p^\lambda_0(x)=p_0(x)+\lambda^2{\rm div}\mathcal{E}(t=0),\quad
  v^\lambda_0(x)=v_0(x).
\end{equation}
Then, for any $T\in
(0,\min\{T_0,T_*\})$, there exist positive constants $K$ and
$\lambda_0, \lambda_0\ll 1$, such that, for any $\lambda\in
(0,\lambda_0)$,
\begin{align}\label{ea27}
 &\sup\limits_{0\leq t\leq T}\Big\{||(\tilde{n}^\lambda,\tilde{p}^\lambda, \tilde{E}^\lambda, \tilde{v}^\lambda)(t)||_{H^1}
 +||(\tilde{n}^\lambda_t,\tilde{p}^\lambda_t,\tilde{v}^\lambda_t)(t)||_{L^2}
 \nonumber\\
 & \qquad\qquad +\lambda ||\tilde{E}^\lambda(t)||_{H^2}+\lambda ||\tilde{E}^\lambda_t(t)||_{H^1}\Big\}\leq K\lambda^{1-\sigma/2}
\end{align}
for any $\sigma\in (0,2)$, independent of $\lambda$. Here
$\tilde{n}^\lambda=n^\lambda-n,
 \tilde{p}^\lambda=p^\lambda-p, \tilde{E}^\lambda=E^\lambda-\mathcal{E}, $ and $\tilde{v}^\lambda=v^\lambda-v$.
\end{thm}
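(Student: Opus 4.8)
The plan is to derive the equations satisfied by the error quantities $(\tilde n^\lambda,\tilde p^\lambda,\tilde E^\lambda,\tilde v^\lambda)$ and then close a weighted energy estimate uniformly in $\lambda$. Subtracting \eqref{ea7}--\eqref{ea11} from \eqref{ea1}--\eqref{ea5}, one obtains transport-diffusion equations for $\tilde n^\lambda,\tilde p^\lambda$ with source terms that are either linear in the errors or contain factors like $\nabla\tilde\Phi^\lambda=-\tilde E^\lambda$ and $\tilde v^\lambda$, a Navier-Stokes-type equation for $\tilde v^\lambda$ with forcing $(\tilde n^\lambda-\tilde p^\lambda)\mathcal{E}+(n^\lambda-p^\lambda)\tilde E^\lambda$ (rewritten using \eqref{ea3} and \eqref{ea9}), and, crucially, the relation obtained from \eqref{ea3} minus \eqref{ea9}:
\begin{equation*}
\lambda^2\Delta\Phi^\lambda=\tilde n^\lambda-\tilde p^\lambda-\lambda^2\,\mathrm{div}\,\mathcal{E},
\qquad\text{i.e.}\qquad
\tilde n^\lambda-\tilde p^\lambda=-\lambda^2\,\mathrm{div}\,\tilde E^\lambda-\lambda^2\,\mathrm{div}\,\mathcal{E}.
\end{equation*}
This identity is what ties the charge imbalance to $\lambda^2$ and explains the asymmetric choice of initial data \eqref{ea26}: it makes the initial data for $\tilde p^\lambda$ compatible so that $\tilde n^\lambda-\tilde p^\lambda=-\lambda^2\,\mathrm{div}\,\tilde E^\lambda$ exactly, with no residual $O(\lambda^2)$ forcing from the doping profile. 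Thus $\tilde E^\lambda$ is only controlled through a $\lambda^2\|\tilde E^\lambda\|^2$-type quantity, which is the source of the weight.

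Next I would set up the weighted energy functional. The natural object is something of the form
\begin{equation*}
\mathcal{G}^\lambda(t)=\|(\tilde n^\lambda,\tilde p^\lambda)(t)\|_{H^1}^2+\|\tilde v^\lambda(t)\|_{H^1}^2+\lambda^2\|\tilde E^\lambda(t)\|_{H^2}^2+(\text{terms with }\partial_t),
\end{equation*}
and to differentiate in time, integrating by parts. The diffusion in \eqref{ea1}--\eqref{ea2} and the viscosity in \eqref{ea4} provide $\|\nabla\tilde n^\lambda\|^2$, $\|\nabla\tilde p^\lambda\|^2$, $\|\nabla\tilde v^\lambda\|^2$ dissipation terms; the lower bound $n+p\geq\kappa_0>0$ from the hypothesis is used exactly as in Debye-length problems to produce a good sign for the coupling term $\int(n+p)|\tilde E^\lambda|^2$ (after writing the drift terms $n^\lambda\nabla\Phi^\lambda-n\,\mathcal{E}$ etc. and using the Poisson relation above), which is what upgrades the bare $\lambda^2\|\tilde E^\lambda\|^2$ into genuine control of $\|\tilde E^\lambda\|_{L^2}$ and, via elliptic regularity on $\lambda^2\Delta\tilde\Phi^\lambda=\tilde n^\lambda-\tilde p^\lambda$, of $\lambda\|\tilde E^\lambda\|_{H^1}$ and $\lambda^2\|\tilde E^\lambda\|_{H^2}$. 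One also needs the time-derivative quantities $\|\tilde n^\lambda_t\|_{L^2}$, $\|\tilde p^\lambda_t\|_{L^2}$, $\|\tilde v^\lambda_t\|_{L^2}$ and $\lambda\|\tilde E^\lambda_t\|_{H^1}$ in the functional, obtained by differentiating the error equations in $t$ and repeating the estimate; these are forced because the nonlinear terms (e.g. $\tilde v^\lambda\cdot\nabla\tilde v^\lambda$, $\mathrm{div}(\tilde n^\lambda\nabla\tilde\Phi^\lambda)$) require $L^\infty$-in-space bounds that come from $H^2\hookrightarrow L^\infty$ or from interpolating with the time derivative.

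The estimate should be organized as a differential inequality of the schematic form
\begin{equation*}
\frac{d}{dt}\mathcal{G}^\lambda(t)+c\,\mathcal{D}^\lambda(t)\leq C\big(\mathcal{G}^\lambda(t)+\mathcal{G}^\lambda(t)^{3/2}\big)+C\lambda^{2-\sigma},
\end{equation*}
where $\mathcal{D}^\lambda$ is the dissipation and the $\lambda^{2-\sigma}$ on the right (for any $\sigma\in(0,2)$) comes from bounding the remaining commutator/consistency terms involving the smooth limiting solution $(n,p,\mathcal{E},v)$ and its derivatives — these are the terms that do not vanish identically and carry explicit powers of $\lambda$, e.g. contributions of $\lambda^2\Delta\mathcal{E}_t$ or of $\lambda\nabla\mathcal{E}$ paired against $\lambda\tilde E^\lambda$; the loss of $\lambda^\sigma$ is the price of a Young's inequality $ab\leq\epsilon a^2+\epsilon^{-1}b^2$ used to absorb a term like $\lambda\|\tilde E^\lambda\|\,\|\nabla\tilde v^\lambda\|$ into dissipation. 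Since \eqref{ea26} gives $\mathcal{G}^\lambda(0)=0$ (the only nonzero initial error is $\tilde p^\lambda_0=\lambda^2\,\mathrm{div}\,\mathcal{E}(0)=-\lambda^2\Delta\Phi(0)$, contributing $O(\lambda^4)$, harmless), a standard continuation/bootstrap argument on $[0,T]$ then yields $\mathcal{G}^\lambda(t)\leq K^2\lambda^{2-\sigma}$, hence \eqref{ea27}. \textbf{The main obstacle} I anticipate is the careful treatment of the coupling between the Poisson equation and the convective/drift terms so as to extract the positive term $\int(n+p)|\tilde E^\lambda|^2$ without generating uncontrolled $\lambda^{-1}$ or $\lambda^{-2}$ factors — in particular making sure every term that is naively $O(1)$ either cancels, is absorbed by dissipation, or carries a compensating $\lambda$; and relatedly, controlling the highest-order term $\lambda^2\|\tilde E^\lambda\|_{H^2}^2$ requires differentiating the error equations enough times while keeping all nonlinear products below the available regularity, which is delicate in three dimensions where one cannot afford to spend $H^2$ on more than one factor.
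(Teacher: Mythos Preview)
Your overall strategy matches the paper's: derive the error system, build a $\lambda$-weighted energy functional that includes time derivatives, exploit $n+p\geq\kappa_0$ to produce the good-sign term $\int Z|\tilde E^\lambda|^2$, and close by a continuation argument. A few points where the paper's execution differs from your sketch are worth flagging.

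First, the paper does not work with $(\tilde n^\lambda,\tilde p^\lambda)$ separately but passes to $\tilde z^\lambda=\tilde n^\lambda+\tilde p^\lambda$ and uses the Poisson relation to replace $\tilde n^\lambda-\tilde p^\lambda$ by $-\lambda^2\,\mathrm{div}\,\tilde E^\lambda-\lambda^2\,\mathrm{div}\,\mathcal E$; the functional is then built from $(\tilde z^\lambda,\tilde E^\lambda,\tilde v^\lambda)$. Second, the closing inequality is not a differential inequality of your schematic form but an \emph{integral} inequality
\[
\Gamma^\lambda(t)+\int_0^t G^\lambda\,ds\;\leq\;K\Gamma^\lambda(0)+K\lambda^4+K(\Gamma^\lambda(t))^2+K\!\int_0^t\!\Gamma^\lambda G^\lambda\,ds+K\!\int_0^t\!\big[\Gamma^\lambda+(\Gamma^\lambda)^2\big]\,ds,
\]
where $G^\lambda$ is the dissipation. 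The extra term $\int_0^t\Gamma^\lambda G^\lambda$ is structural: at the highest order some nonlinear products (for instance $\|\tilde z^\lambda\|_{H^2}^2\|\mathrm{div}\,\tilde E^\lambda_t\|^2$ or $\lambda^2\|\tilde E^\lambda\|_{H^2}^2\|\nabla\,\mathrm{div}\,\tilde E^\lambda_t\|^2$) are controlled only by energy times dissipation, not by energy squared, so they cannot be put into your $\mathcal G^{3/2}$ term. Third, and related, your attribution of the $\sigma$-loss is off: the inhomogeneous forcing in the energy estimates is a clean $K\lambda^4$, with no $\sigma$ anywhere. The loss $\lambda^{2-\sigma}$ appears only at the final step, in a nonlinear Gronwall-type lemma needed precisely to handle the $\int\Gamma^\lambda G^\lambda$ term (one bootstraps $\Gamma^\lambda\leq\lambda^{2-\sigma}$ so that $\Gamma^\lambda G^\lambda$ can be absorbed into the dissipation), not from a Young's inequality applied to cross terms during the estimates.
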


\begin{rem}
 In this paper we deal with the three spatial dimensional case, if the
 problem \eqref{ea1}-\eqref{ea5} is considered in  two dimensional space,
both the problem \eqref{ea1}-\eqref{ea5} and the limiting problem
\eqref{ea7}-\eqref{ea11} enjoy
  global smooth solutions, thus we can   obtain
 a similar result to that stated in Theorem \ref{th} (in fact much easier).
\end{rem}

\begin{rem}
 If the assumption \eqref{ea26} does not hold, we need to consider the   initial layers.
 On the other hand, if we consider the system \eqref{ea1}-\eqref{ea5} on the smooth bounded domain in
 $\mathbb{R}^3$, the boundary layers may appear. These issues will be   studied in  the future.
\end{rem}

The main difficulty in dealing with the quasineutral limits is the
oscillatory behavior of the electric field (the Poisson equation
becomes an algebraic equation in the limit). Usually it is difficult
to obtain uniform estimates on the electric field with respect to
the Debye length $\lambda$ due to a possible vacuum set of density.
To overcome this difficulty, we introduce the following
$\lambda$-weighted Lyapunov-type functionals
\begin{align}\label{es15}
\Gamma^\lambda (t)\equiv& ||(\tilde{z}^\lambda, \nabla
\tilde{z}^\lambda,\Delta \tilde{z}^\lambda,
\tilde{z}^\lambda_t,\nabla \tilde{z}^\lambda_t)||^2
+||(\tilde{v}^\lambda, \nabla \tilde{v}^\lambda,\Delta \tilde{v}^\lambda, \tilde{v}^\lambda_t,\nabla \tilde{v}^\lambda_t)||^2\nonumber\\
&+\lambda^2||(\tilde{E}^\lambda, {\rm div}  \tilde{E}^\lambda,
\nabla {\rm div} \tilde{E}^\lambda, \tilde{E}^\lambda_t, {\rm
div}  \tilde{E}^\lambda_t)||^2+ ||(\tilde{E}^\lambda, {\rm
div}\tilde{E}^\lambda)||^2
\end{align}
and
\begin{align}\label{es16}
G^\lambda(t)\equiv||(\Delta \tilde{z}_t, \Delta\tilde{v}_t,
\tilde{E}^\lambda_t, {\rm
div}\tilde{E}^\lambda_t)||^2_{L^2}+\lambda^2||\nabla {\rm
div}\tilde{E}^\lambda_t||^2,
\end{align}
where $\tilde{z}^\lambda=\tilde{n}^\lambda +\tilde{p}^\lambda$,
 $\tilde{n}^\lambda=n^\lambda-n,
\tilde{p}^\lambda=p^\lambda-p,  \tilde{v}^\lambda=v^\lambda-v,
 \tilde{E}^\lambda=E^\lambda-\mathcal{E}$ and
$(\tilde{n}^\lambda,\tilde{p}^\lambda,\tilde{E}^\lambda,\tilde{v}^\lambda)$
denotes the difference between the solution to the system
\eqref{ea1}-\eqref{ea5} and the solution to the limiting system
\eqref{ea7}-\eqref{ea11}, see Section \ref{S2} below for details. By
a careful energy method, we can prove the following entropy
production integration inequality
\begin{align}\label{ns16}
\Gamma^\lambda (t)+\int^t_0 G^\lambda(s)ds\leq &
K\,\Gamma^\lambda(t=0)+K\lambda^q+K(\Gamma^\lambda(t))^r+K\int^t_0
\Gamma^\lambda(s)G^\lambda(s)ds\nonumber\\
& +K\int^t_0
\big[ \Gamma^\lambda(s) +(\Gamma^\lambda(s))^l\big]ds,
\quad t\geq 0
\end{align}
for  some positive constants $q,r,K$ and $l$, independent of $\lambda$, which
implies our desired convergence result by the assumption of small initial data
$\Gamma^\lambda (0)$.

\begin{rem}
 The inequality \eqref{ns16} is a generalized Gronwall's type with an extra integration term where
the integrand function is the production of the entropy and the entropy-dissipation. Hence  \eqref{ns16} is
called as the entropy production integration inequality.
\end{rem}

\begin{rem}
The $\lambda$-weighted Lyapunov-type functional \eqref{es15} and
\eqref{es16} is motivated by  \cite{HW06,W06}, where the
quasineutral limit of drift-diffusion-Poisson model for
semiconductor was studied. However, in our case the incompressible
Navier-Stokes equations are involved and the more refined energy
analysis  is needed. We believe that those  $\lambda$-weighted
Lyapunov-type   energy functionals  can also be used to deal with
the quasineutral limit problem of  other  mathematical models
involving in Navier-Stokes equations,   for example,   the
mathematical model for the deformation of electrolyte droplets:
\begin{align*}
\rho(u_t +u\cdot \nabla u)+\pi& =\nu \Delta u+(n-p)\nabla V-\nabla \cdot (\nabla \phi \otimes \nabla \phi),\\
\nabla \cdot u &=0,\\
n_t+u\cdot \nabla n& =\nabla \cdot (D_n\nabla n-\mu_nn\nabla V+M n\nabla \phi),\\
p_t+u\cdot \nabla p& =\nabla \cdot (D_p\nabla p-\mu_pp\nabla V+M p\nabla \phi),\\
\nabla \cdot (\lambda  \nabla V)& =n-p,\\
\phi_t+u\cdot \nabla\phi & =\gamma (\Delta\phi-\eta^{-2}W'(\phi)),
\end{align*}
where $\gamma, \nu, \eta, D_n,D_p,\mu_n,\mu_p $ and $M $ are positive constants,
see \cite{RLZ} for the detailed description on this model.
\end{rem}

We point out  that the quasineutral limit is a well-known
challenging and physically complex modeling problem for fluid
dynamic models and for kinetic models of semiconductors and plasmas
and other fields. In both cases, there only exist partial results.
For time-dependent transport models, the limit $\lambda \rightarrow
0$ has be performed for the Vlasov-Poisson system by
Brenier~\cite{B} and Masmoudi~\cite{M},   and for the
Vlasov-Poisson-Fokker-Planck system by Hsiao et al~\cite{HLW2,HLW3},
respectively. For the fluid dynamic model, the
drift-diffusion-Poisson system is investigate by Gasser et
al~\cite{GHMW,GLMS} and J\"{u}ngel and Peng \cite{JP}, and for the
Euler-Poisson system by Cordier and Grenier ~\cite{CG} and
Wang~\cite{W04}. Recently, Wang et al  \cite{W06,HW06,WXM} extends
some results cited above for the general doping profiles, the main
idea is to control  the strong nonlinear oscillations caused by
small Debye length by the interaction of the physically motivated
entropy and the entropy dissipation. For the Navier-Stokes-Poisson
system, Wang \cite{W04,WaJ06} obtained the convergence of the
Navier-Stokes-Poisson system to the incompressible Euler equations.
Ju et al \cite{JLW} obtained the convergence of weak solutions of
the Navier-Stokes-Poisson system to the
  strong   solutions of incompressible Navier-Stokes
equations. Donatelli and  Marcati \cite{DM} studied  the
quasineutral-type   limit for the Navier-Stokes-Poisson system with large initial
data in the whole space $\mathbb{R}^3$ through
 the coupling of the zero-Debye-length limit and the low Mach number limit.

We mention  that there are a few other mathematical results on the
system \eqref{ea1}-\eqref{ea5}.  Jerome~\cite{J02} obtained the
inviscid limit ($\mu \rightarrow 0)$ of the system
\eqref{ea1}-\eqref{ea5}. Cimatti and Fragal\`{a}~\cite{CF} obtained
the unique weak solution to the system \eqref{ea1}-\eqref{ea5} with
Neumann boundary condition and the  asymptotic behavior of solution
when it is a small perturbation of the trivial solution for the
stationary problem. Feireisl~\cite{F95} studied the system
\eqref{ea1}-\eqref{ea5} in periodic case without the diffusion terms
in the first two equations and obtained the existence of weak
solution.

Before ending this introduction, we give some notations. We denote
$||\cdot||$  the standard $L^2$ norm with respect to $x$, $H^k$
the standard Sobolev space $W^{k,2}$, and $||\cdot||_{H^k}$ the
corresponding  norm.  The notation $||(A_1,A_2, \cdots,A_n)||^2$
means the summation of $||A_i||^2,i=1,\cdots,n$,
 and it   also applies to    other norms.
We use $c_i$, $\delta_i$, $\epsilon$,
  $K_\epsilon$,  $K_i$, and $K$ to denote the constants which are
independent of $\lambda$ and may be changed from line to line. We
also omit in integral spatial domain $\mathbb{T}^3$ for convenience.
In Section \ref{S2}, we give some basic energy estimates of the error
system, and the proof of Theorem \ref{th} is given in Section \ref{S3}.

\renewcommand{\theequation}{\thesection.\arabic{equation}}
\setcounter{equation}{0}
\section{The energy estimates} \label{S2}

In this section we obtain some energy estimates needed to prove our
result. To this end, we first derive the error system from the original system \eqref{ea1}-\eqref{ea5}
and the limiting system \eqref{ea7}-\eqref{ea11} as follows.
Setting $\tilde{n}^\lambda=n^\lambda-n,   \tilde{p}^\lambda=p^\lambda-p,
\tilde{v}^\lambda=v^\lambda-v,  \tilde{\pi}^\lambda=\pi^\lambda-\pi,
 \tilde{E}^\lambda=E^\lambda-\mathcal{E}$ with $ \tilde{E}^\lambda=-\nabla \tilde{\Phi}^\lambda,
 {E}^\lambda=-\nabla {\Phi}^\lambda, \mathcal{E}=-\nabla\Phi $ and
 $\tilde{\Phi}^\lambda={\Phi}^\lambda-{\Phi}$,
   using  the system
\eqref{ea1}-\eqref{ea5} and the system \eqref{ea7}-\eqref{ea11}, we
  obtain
\begin{align}
 & \tilde{n}^\lambda_t=\text{div}(\nabla \tilde{n}^\lambda +n\tilde{E}^\lambda +\tilde{n}^\lambda
  (\tilde{E}^\lambda+\mathcal{E})-\tilde{n}^\lambda(\tilde{v}^\lambda+v)-n\tilde{v}^\lambda), \label{ea12}\\
  & \tilde{p}^\lambda_t=\text{div}(\nabla \tilde{p}^\lambda -p\tilde{E}^\lambda -\tilde{p}^\lambda
  (\tilde{E}^\lambda+\mathcal{E})-\tilde{p}^\lambda(\tilde{v}^\lambda+v)-p\tilde{v}^\lambda) ,\label{ea13}\\
&-\lambda^2\text{div}\tilde{E}^\lambda =\tilde{n}^\lambda
-\tilde{p}^\lambda
+\lambda^2\text{div}\mathcal{E},\label{ea14}\\
&\tilde{v}^\lambda_t+\tilde{v}^\lambda\cdot \nabla
\tilde{v}^\lambda+{v}\cdot \nabla
\tilde{v}^\lambda+\tilde{v}^\lambda\cdot \nabla  {v} +\nabla
\tilde{\pi}^\lambda-\mu \Delta \tilde{v}^\lambda\nonumber\\
&\qquad =-(\tilde{n}^\lambda -\tilde{p}^\lambda)(\tilde{E}^\lambda +\mathcal{E})-(n-p)\tilde{E}^\lambda, \label{ea15}\\
& \text{div}\tilde{v}^\lambda =0.  \label{ea16}
\end{align}

Set ${Z}=n+p$, then \eqref{ea7}-\eqref{ea11} is reduced to
\begin{align*}
 Z_t&=\text{div}(\nabla Z+D\mathcal{E}-Zv),\\
 0&=\text{div}(\nabla D +Z\mathcal{E}-Dv),\\
 v_t+v\cdot \nabla v&=-\nabla\pi +\mu \Delta v-D\mathcal{E},\\
 \text{div}v&=0
\end{align*}
with initial data $ Z(x,0)=n_0(x)+p_0(x)$ and $ v(x,0)=v_0(x).$

To obtain the desired energy estimates, we introduce new error
variable $\tilde{z}^\lambda=\tilde{n}^\lambda +\tilde{p}^\lambda$,
by the Poisson equation~\eqref{ea14},  we have
\begin{align}\label{ea21}
  \tilde{n}^\lambda =\frac{\tilde{z}^\lambda
  -\lambda^2\text{div}\tilde{E}^\lambda
  -\lambda^2\text{div}\mathcal{E}}{2},\qquad
    \tilde{p}^\lambda =\frac{\tilde{z}^\lambda
  +\lambda^2\text{div}\tilde{E}^\lambda
  +\lambda^2\text{div}\mathcal{E}}{2}.
\end{align}
Thus  the error system can be reduced to the following equivalent system
\begin{align}
&\tilde{z}_t^\lambda=\text{div}(\nabla \tilde{z}^\lambda
+D\tilde{E}^\lambda)-\lambda^2\text{div}(\mathcal{E}\text{div}\tilde{E}^\lambda+\tilde{E}^\lambda
\text{div}\mathcal{E})-\lambda^2\text{div}(\mathcal{E}\text{div}\mathcal{E})\nonumber\\
&\qquad \quad-\text{div}(\tilde{z}^\lambda\tilde{v}^\lambda
+\tilde{z}^\lambda v)
-\text{div}(Z\tilde{v}^\lambda)-\lambda^2\text{div}(\tilde{E}^\lambda\text{div}\tilde{E}^\lambda),\label{ea22}\\
& \lambda^2[\partial_t\text{div}\tilde{E}^\lambda-\text{div}(\nabla
\text{div}\tilde{E}^\lambda)]
+\text{div}(Z\tilde{E}^\lambda)\nonumber\\
& \qquad =-\lambda^2(\partial_t\text{div}\mathcal{E}-\Delta
\text{div}\mathcal{E})-\text{div}(\tilde{z}^\lambda\mathcal{E})
-\text{div}(\tilde{z}^\lambda\tilde{E}^\lambda)+\text{div}(D\tilde{v}^\lambda)\nonumber\\
&\qquad\quad
-\lambda^2\text{div}(\tilde{v}^\lambda\text{div}\mathcal{E}+v\text{div}\mathcal{E})
-\lambda^2\text{div}(\tilde{v}^\lambda{\rm div}\tilde{E}^\lambda+v\text{div}\tilde{E}^\lambda),\label{ea23}\\
&\tilde{v}^\lambda_t+\tilde{v}^\lambda\cdot \nabla
\tilde{v}^\lambda+{v}\cdot \nabla
\tilde{v}^\lambda+\tilde{v}^\lambda\cdot \nabla  {v} +\nabla
\tilde{\pi}^\lambda-\mu \Delta \tilde{v}^\lambda\nonumber\\
& \qquad =\lambda^2\tilde{E}^\lambda\text{div}\mathcal{E}
+\lambda^2\mathcal{E}\text{div}\tilde{E}^\lambda+\lambda^2\mathcal{E}\text{div}\mathcal{E}-D\tilde{E}^\lambda
+\lambda^2\tilde{E}^\lambda\text{div}\tilde{E}^\lambda,\label{ea24}\\
& \text{div}\tilde{v}^\lambda=0.\label{ea25}
\end{align}

For  the sake of notional simplicity, we set
$\tilde{\mathbf{w}}^\lambda=(\tilde{z}^\lambda, \tilde{E}^\lambda,
\tilde{v}^\lambda)$ and define the following  $\lambda$-weighted 
Sobolev's norm
\begin{equation}\label{eb1}
|||\tilde{\mathbf{w}}^\lambda|||^2\equiv||(\tilde{z}^\lambda,
\lambda\tilde{E}^\lambda,
\tilde{v})||^2_{H^2}+||( \tilde{z}^\lambda_t,
\lambda  \tilde{E}^\lambda_t,
 \tilde{v}^\lambda_t)||^2_{H^1}+||\tilde{E}^\lambda||^2_{H^1}.
\end{equation}

The following basic inequality can be derived from Sobolev's
embedding theorem and  will be used frequently in this paper.
\begin{lem}\label{L1}
 For $f,g\in H^1(\mathbb{T}^3)$, we have
\begin{equation}\label{eb2}
||fg||_{L^2}\leq ||f||_{L^4}\cdot||g||_{L^4}\leq K||f||_{H^1}\cdot
||g||_{H^1}.
\end{equation}
\end{lem}

\subsection{Low order estimates}
In this subsection, we derive  the low order energy estimates from
the error system \eqref{ea22}-\eqref{ea25}. The first estimate is
the $L^\infty_t(L^2_x)$ norm of $(\tilde{z}^\lambda,
\tilde{v}^\lambda, \tilde{E}^\lambda)$.
\begin{lem}\label{L2}
   Under the assumptions of Theorem \ref{th}, we have
   \begin{align}\label{eb3}
&||\tilde{z}^\lambda||^2 +||\tilde{v}^\lambda||^2 +\lambda^2||\tilde{E}^\lambda||^2 \nonumber\\
&\quad + \int^t_0 \big(||\nabla \tilde{z}^\lambda||^2 +\lambda^2
||{\rm div}\tilde{E}^\lambda||^2 +||\tilde{E}^\lambda||^2
+||\nabla\tilde{v}^\lambda||^2
 \big)(s)ds\nonumber\\
& \leq
 K(||\tilde{z}^\lambda||^2+||\tilde{v}^\lambda||^2 +\lambda^2||\tilde{E}^\lambda||^2 )(t=0)
   \nonumber\\
 & \quad+K\int^t_0\big(||\tilde{z}^\lambda||^2 +||\tilde{v}^\lambda||^2  +|||\tilde{\mathbf{w}}^\lambda|||^4\big)(s)ds+K\lambda^4.
  \end{align}
\end{lem}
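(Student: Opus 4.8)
The plan is to perform a standard $L^2$ energy estimate on the error system \eqref{ea22}--\eqref{ea25}, treating the three equations simultaneously with carefully chosen multipliers so that the dangerous singular-in-$\lambda$ terms cancel. First I would test \eqref{ea22} by $\tilde{z}^\lambda$, \eqref{ea23} by $\tilde{E}^\lambda$, and \eqref{ea24} (after applying the Leray projection, or equivalently using $\mathrm{div}\,\tilde{v}^\lambda=0$ to kill the pressure term $\nabla\tilde{\pi}^\lambda$) by $\tilde{v}^\lambda$, then integrate over $\mathbb{T}^3$ and integrate by parts. The diffusion terms produce the good dissipation $\|\nabla\tilde{z}^\lambda\|^2$, $\lambda^2\|\mathrm{div}\,\tilde{E}^\lambda\|^2$, and $\mu\|\nabla\tilde{v}^\lambda\|^2$ on the left; the zeroth-order reaction term $\mathrm{div}(Z\tilde{E}^\lambda)$ in \eqref{ea23} tested against $\tilde{E}^\lambda$, combined with the lower bound $Z=n+p\geq\kappa_0>0$, yields the crucial coercive term $\kappa_0\|\tilde{E}^\lambda\|^2$ (up to lower-order pieces controllable by interpolation and absorption).

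The key structural point — and the reason the multipliers are chosen exactly this way — is the cancellation of the coupling terms that are formally $O(1)$ but arise from singular $\lambda^{-2}$ interactions. The term $\mathrm{div}(D\tilde{E}^\lambda)$ in \eqref{ea22} tested by $\tilde{z}^\lambda$, the term $-\mathrm{div}(\tilde{z}^\lambda\mathcal{E})$ in \eqref{ea23} tested by $\tilde{E}^\lambda$, the term $\mathrm{div}(D\tilde{v}^\lambda)$ in \eqref{ea23} tested by $\tilde{E}^\lambda$, and the Lorentz-type forcing $-D\tilde{E}^\lambda$ in \eqref{ea24} tested by $\tilde{v}^\lambda$ must be arranged so that the pairings cancel in symmetric pairs after integration by parts (using $\mathrm{div}\,\tilde v^\lambda=0$ and moving derivatives between factors). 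Concretely I expect $\int\mathrm{div}(D\tilde{v}^\lambda)\tilde{E}^\lambda$ to cancel against $-\int D\tilde{E}^\lambda\cdot\tilde{v}^\lambda$ after integrating by parts, and similarly the $\mathcal{E}$-weighted terms to combine. The remaining genuinely-$\lambda^2$ terms — those involving $\lambda^2\mathcal{E}\,\mathrm{div}\,\mathcal{E}$, $\lambda^2(\partial_t\mathrm{div}\,\mathcal{E}-\Delta\mathrm{div}\,\mathcal{E})$, etc. — are bounded using smoothness of the limit solution $(n,p,\mathcal{E},v)$ on $[0,T]$, producing the $K\lambda^4$ term on the right-hand side after a Cauchy–Schwarz/Young step; the factor $\lambda^4$ (rather than $\lambda^2$) comes from having $\lambda^2$-coefficients squared in the $L^2$ energy.

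The nonlinear terms are handled by Lemma \ref{L1} and Sobolev embedding: the convective and self-interaction terms such as $\mathrm{div}(\tilde{z}^\lambda\tilde{v}^\lambda)$, $\mathrm{div}(\tilde{z}^\lambda v)$, $\tilde{v}^\lambda\cdot\nabla\tilde{v}^\lambda$, and $\lambda^2\,\mathrm{div}(\tilde{E}^\lambda\,\mathrm{div}\,\tilde{E}^\lambda)$ are estimated by distributing one derivative, applying $\|fg\|_{L^2}\leq K\|f\|_{H^1}\|g\|_{H^1}$, and then bounding by $\epsilon$ times a dissipation term plus $K_\epsilon(\|\tilde{z}^\lambda\|^2+\|\tilde{v}^\lambda\|^2+|||\tilde{\mathbf{w}}^\lambda|||^4)$; one absorbs the $\epsilon$-terms into the left-hand dissipation by choosing $\epsilon$ small relative to $\mu$ and $\kappa_0$. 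Terms like $\mathrm{div}(Z\tilde v^\lambda)$ tested against $\tilde z^\lambda$ are lower order (one factor smooth). Finally, integrating in time from $0$ to $t$ and collecting everything yields \eqref{eb3}. The main obstacle I anticipate is verifying the precise cancellation of the $O(1)$ coupling terms and keeping scrupulous track of the $\lambda$-weights so that no uncontrolled negative power of $\lambda$ survives; getting the integration-by-parts bookkeeping exactly right (including which terms feed into $\|\tilde{E}^\lambda\|^2$ versus $\lambda^2\|\mathrm{div}\,\tilde{E}^\lambda\|^2$) is the delicate part, whereas the nonlinear estimates are routine given Lemma \ref{L1}.
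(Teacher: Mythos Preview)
Your overall strategy---test \eqref{ea22} by $\tilde z^\lambda$, \eqref{ea23} by the potential, \eqref{ea24} by $\tilde v^\lambda$, extract $\kappa_0\|\tilde E^\lambda\|^2$ from the $Z$--coercivity, bound the nonlinear terms via Lemma~\ref{L1}, integrate in time---is exactly the paper's route. Two corrections, one minor and one substantive.

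Minor: equation \eqref{ea23} is scalar, so you cannot test it by the vector $\tilde E^\lambda$. The paper multiplies by the scalar $-\tilde\Phi^\lambda$; after integration by parts every $\mathrm{div}(F)$ turns into $-F\cdot\nabla\tilde\Phi^\lambda=F\cdot\tilde E^\lambda$, which is presumably what you meant.

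Substantive: the paper uses \emph{no} cancellation whatsoever. It bounds every cross term separately by Cauchy--Schwarz, e.g.\ $-\int D\tilde E^\lambda\!\cdot\!\nabla\tilde z^\lambda\le \epsilon\|\nabla\tilde z^\lambda\|^2+K_\epsilon\|\tilde E^\lambda\|^2$ and $-\int D\tilde E^\lambda\!\cdot\!\tilde v^\lambda\le K\|(\tilde v^\lambda,\tilde E^\lambda)\|^2$. Your plan relies on symmetric cancellations to kill these $\|\tilde E^\lambda\|^2$ contributions, but the scheme is incomplete: while $\int D\tilde v^\lambda\!\cdot\!\tilde E^\lambda$ from \eqref{ea23} does cancel $-\int D\tilde E^\lambda\!\cdot\!\tilde v^\lambda$ from \eqref{ea24}, the term $-\int D\tilde E^\lambda\!\cdot\!\nabla\tilde z^\lambda$ from \eqref{ea22} has no partner (the $\mathcal E$--weighted term from \eqref{ea23} carries $\mathcal E$, not $D$, and has the wrong structure). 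Bounding it leaves $K_\epsilon\|\tilde E^\lambda\|^2$ on the right, and taking $\epsilon$ small makes $K_\epsilon$ large, so you cannot absorb it into $\kappa_0\|\tilde E^\lambda\|^2$ just by tuning $\epsilon$.

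The paper's device is instead to form the weighted sum $\delta_1\cdot[\tilde z\text{-estimate}]+[\tilde E\text{-estimate}]+\delta_2\cdot[\tilde v\text{-estimate}]$ with $\delta_1,\delta_2$ small; then the bad contributions become $(\delta_1+\delta_2)K\|\tilde E^\lambda\|^2$, which \emph{is} absorbed by $c_2\|\tilde E^\lambda\|^2$ for $\delta_i$ small enough, and one further restricts $\lambda$ so that the $\lambda^4\|\mathrm{div}\,\tilde E^\lambda\|^2$ remainders are absorbed by $\lambda^2\|\mathrm{div}\,\tilde E^\lambda\|^2$. This weighting step, not any cancellation, is what closes the estimate; add it and your argument goes through. (There are also no ``singular $\lambda^{-2}$ interactions'' in the error system---the reformulation \eqref{ea22}--\eqref{ea25} has already removed them.)
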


\begin{proof}
  Multiplying \eqref{ea22} by $\tilde{z}^\lambda$ and integrating
  the resulting equation over $\mathbb{T}^3$ with respect to $x$, we get
\begin{align}\label{eb4}
 & \frac{1}{2}\frac{d}{dt}||\tilde{z}^\lambda||^2+||\nabla
  \tilde{z}^\lambda||^2\nonumber\\
  &= -\int D\tilde{E}^\lambda\nabla\tilde{z}^\lambda dx
  +\lambda^2\int \mathcal{E}{\rm div}\mathcal{E}\nabla \tilde{z}^\lambda dx+\int v \tilde{z}^\lambda \nabla \tilde{z}^\lambda dx\nonumber\\
  &\quad +\lambda^2\int (\tilde{E}^\lambda{\rm
  div}\mathcal{E}+\mathcal{E}{\rm
  div}\tilde{E}^\lambda)\nabla \tilde{z}^\lambda dx+\int
  Z\tilde{v}^\lambda \nabla \tilde{z}^\lambda dx \nonumber\\
 & \quad +\int \tilde{z}^\lambda \tilde{v}^\lambda \nabla \tilde{z}^\lambda dx+\lambda^2\int {\rm div}\tilde{E}^\lambda \tilde{E}^\lambda \nabla
  \tilde{z}^\lambda dx.
\end{align}
We estimate the terms on the right-hand side of \eqref{eb4}. For the first
five terms, by Cauchy-Schwartz's inequality and using the regularity
of $D, \mathcal{E}, v  $ and $Z$, which can be bounded by
\begin{equation}\label{eb5}
  \epsilon ||\nabla \tilde{z}^\lambda||^2+K_\epsilon
  ||(\tilde{E}^\lambda,\tilde{v}^\lambda
  ,\tilde{z}^\lambda)||^2+
  K_\epsilon \lambda^4||(\tilde{E}^\lambda, {\rm
  div}\tilde{E}^\lambda)||^2+K_\epsilon \lambda^4.
\end{equation}
For the sixth nonlinear term, by  Cauchy-Schwartz's inequality and
Sobolev's embedding $H^2(\mathbb{T}^3)\hookrightarrow
L^\infty(\mathbb{T}^3)$, we get
\begin{align}\label{eb6}
\int \tilde{z}^\lambda \tilde{v}^\lambda \nabla \tilde{z}^\lambda dx
&\leq \epsilon||\nabla \tilde{z}^\lambda||^2+K_\epsilon
||\tilde{v}^\lambda \tilde{z}^\lambda||^2\nonumber\\
&\leq \epsilon||\nabla \tilde{z}^\lambda||^2+K_\epsilon
||\tilde{v}^\lambda||_{L^\infty}^2|| \tilde{z}^\lambda||^2\nonumber\\
&\leq \epsilon||\nabla \tilde{z}^\lambda||^2+K_\epsilon
||\tilde{v}^\lambda||_{H^2}^2|| \tilde{z}^\lambda||^2\nonumber\\
&\leq \epsilon||\nabla \tilde{z}^\lambda||^2+K_\epsilon
|||\tilde{\mathbf{w}}^\lambda|||^4.
\end{align}
Similarly, for the last nonlinear term, we have
\begin{align}\label{eb7}
\lambda^2\int {\rm div}\tilde{E}^\lambda \tilde{E}^\lambda \nabla
  \tilde{z}^\lambda dx
&\leq \epsilon||\nabla \tilde{z}^\lambda||^2+K_\epsilon
\lambda^4||\tilde{E}^\lambda {\rm div}\tilde{E}^\lambda||^2\nonumber\\
&\leq \epsilon||\nabla \tilde{z}^\lambda||^2+K_\epsilon
\lambda^4||\tilde{E}^\lambda||_{L^\infty}^2 ||{\rm div}\tilde{E}^\lambda||^2\nonumber\\
&\leq \epsilon||\nabla \tilde{z}^\lambda||^2+K_\epsilon
\lambda^4||\tilde{E}^\lambda||_{H^2}^2 ||{\rm div}\tilde{E}^\lambda||^2\nonumber\\
&\leq \epsilon||\nabla \tilde{z}^\lambda||^2+K_\epsilon
|||\tilde{\mathbf{w}}^\lambda|||^4.
\end{align}
Thus, putting \eqref{eb4}-\eqref{eb7} together and taking $\epsilon$
small enough, we obtain
\begin{align}\label{eb8}
 \frac{d}{dt}||\tilde{z}^\lambda||^2+c_1||\nabla
  \tilde{z}^\lambda||^2
   \leq & K
  ||(\tilde{z}^\lambda,\tilde{E}^\lambda,\tilde{v}^\lambda)||^2+K\lambda^4||(\tilde{E}^\lambda, {\rm
  div}\tilde{E}^\lambda)||^2\nonumber\\
& +K|||\tilde{\mathbf{w}}^\lambda|||^4+K   \lambda^4.
\end{align}

 Multiplying \eqref{ea23} by $-\tilde{\Phi}^\lambda$ and integrating
  the resulting equation over $\mathbb{T}^3$ with respect to $x$,   we get
\begin{align}\label{eb9}
&\frac{\lambda^2}{2}\frac{d}{dt}||\tilde{E}^\lambda||^2
+\lambda^2||{\rm
div}\tilde{E}^\lambda||^2+\int Z|\tilde{E}^\lambda|^2 dx\nonumber\\
&\quad =-\lambda^2\int (\partial_t \mathcal{E}-\Delta
\mathcal{E})\tilde{E}^\lambda dx-\int \mathcal{E}\tilde{z}^\lambda
\tilde{E}^\lambda dx -\lambda^2\int {\rm div}\mathcal{E}
\tilde{v}^\lambda\tilde{E}^\lambda dx\nonumber\\
&\quad \ \ -\lambda^2 \int {\rm div}\mathcal{E}v \tilde{E}^\lambda
dx+\int D \tilde{v}^\lambda  \tilde{E}^\lambda dx-\lambda^2 \int v
\tilde{E}^\lambda {\rm div}\tilde{E}^\lambda dx\nonumber\\
& \quad \ \ -\lambda^2 \int \tilde{v}^\lambda  \tilde{E}^\lambda
{\rm div }\tilde{E}^\lambda dx- \int \tilde{z}^\lambda
 \tilde{E}^\lambda  \tilde{E}^\lambda dx.
\end{align}
 For the first  six terms on the
right-hand side of \eqref{eb9}, by Cauchy-Schwartz's inequality and
using the regularity of $\mathcal{E}, v $ and $D$, which can be
bounded by
\begin{equation}\label{eb10}
  \epsilon || \tilde{E}^\lambda||^2+K_\epsilon
  ||(\tilde{z}^\lambda,\tilde{v}^\lambda)||^2+
  K_\epsilon \lambda^4||( \tilde{v}^\lambda, {\rm
  div}\tilde{E}^\lambda)||^2+K_\epsilon \lambda^4.
\end{equation}
For the seventh nonlinear term, by Cauchy-Schwartz's inequality and
Sobolev's embedding $H^2(\mathbb{T}^3)\hookrightarrow
L^\infty(\mathbb{T}^3)$, we get
 \begin{align}\label{eb11}
-\lambda^2 \int \tilde{v}^\lambda  \tilde{E}^\lambda {\rm div
}\tilde{E}^\lambda dx
&\leq  \epsilon || \tilde{E}^\lambda||^2+K_\epsilon\lambda^4 || \tilde{v}^\lambda {\rm div
}\tilde{E}^\lambda||^2\nonumber\\
&\leq  \epsilon || \tilde{E}^\lambda||^2+K_\epsilon\lambda^4 || \tilde{v}^\lambda ||^2_{L^\infty}||{\rm div
}\tilde{E}^\lambda||^2\nonumber\\
&\leq  \epsilon || \tilde{E}^\lambda||^2+K_\epsilon\lambda^4 || \tilde{v}^\lambda ||^2_{H^2}||{\rm div
}\tilde{E}^\lambda||^2\nonumber\\
&\leq  \epsilon || \tilde{E}^\lambda||^2+
K_\epsilon \lambda^2|||\tilde{\mathbf{w}}^\lambda|||^4.
 \end{align}
Similarly, for the last nonlinear term, we have
\begin{align}\label{eb12}
  - \int \tilde{z}^\lambda
 \tilde{E}^\lambda  \tilde{E}^\lambda dx
 &\leq \epsilon || \tilde{E}^\lambda||^2+K_\epsilon  || \tilde{z}^\lambda  \tilde{E}^\lambda||^2\nonumber\\
&\leq  \epsilon || \tilde{E}^\lambda||^2+K_\epsilon  || \tilde{z}^\lambda ||^2_{L^\infty}||
 \tilde{E}^\lambda||^2\nonumber\\
 &\leq  \epsilon || \tilde{E}^\lambda||^2+K_\epsilon  || \tilde{z}^\lambda ||^2_{H^2}||
 \tilde{E}^\lambda||^2\nonumber\\
&\leq  \epsilon || \tilde{E}^\lambda||^2+
K_\epsilon|||\tilde{\mathbf{w}}^\lambda|||^4.
 \end{align}
Putting \eqref{eb9}-\eqref{eb12} together,   choosing $\epsilon$
small enough, and restricting $\lambda$ small enough,
   we get, by the positivity of ${Z}$, that
\begin{align}\label{eb13}
 & \lambda^2\frac{d}{dt}||\tilde{E}^\lambda||^2 + 2\lambda^2||{\rm div}\tilde{E}^\lambda||^2+c_2||\tilde{E}^\lambda||^2\nonumber\\
  &\qquad \leq K||(\tilde{z}^\lambda, \tilde{v}^\lambda)||^2 
  +K|||\tilde{\mathbf{w}}^\lambda|||^4+K\lambda^4.
\end{align}

 Multiplying \eqref{ea24} by $\tilde{v}^\lambda$ and integrating
  the resulting equation over $\mathbb{T}^3$ with respect to $x$, by \eqref{ea25} and integration  by parts, we
  obtain
  \begin{align}\label{eb14}
 & \frac{1}{2}\frac{d}{dt}||\tilde{v}^\lambda||^2+\mu ||\nabla \tilde{v}^\lambda||^2\nonumber\\
  & \quad  =-\int D \tilde{E}^\lambda\tilde{v}^\lambda dx +\lambda^2
\int \tilde{v}^\lambda\tilde{E}^\lambda {\rm div}\mathcal{E}dx
+\lambda^2\int \mathcal{E}\tilde{v}^\lambda {\rm div} \tilde{E}^\lambda dx\nonumber\\
& \quad \quad +\lambda^2 \int \tilde{v}^\lambda \mathcal{E}{\rm
div}\mathcal{E}dx -\int (\tilde{v}^\lambda \cdot \nabla
v)\tilde{v}^\lambda dx+\lambda^2\int
\tilde{v}^\lambda\tilde{E}^\lambda{\rm div}\tilde{E}^\lambda dx,
  \end{align}
where we have used the identities
$$
\int (\tilde{v}^\lambda \cdot \nabla
\tilde{v}^\lambda)\tilde{v}^\lambda dx=0,\qquad \int ({v}  \cdot
\nabla \tilde{v}^\lambda)\tilde{v}^\lambda dx=0.
$$
We estimate the terms on the
right hand side of \eqref{eb14}. For the first four terms, by
  Cauchy-Schwartz's inequality and
using the regularity of $D $ and $\mathcal{E}$, which can be
bounded by
\begin{equation}\label{eb15}
K  ||(\tilde{v}^\lambda,\tilde{E}^\lambda)||^2+
K  \lambda^2||(\tilde{v}^\lambda , \tilde{E}^\lambda, {\rm div}\tilde{E}^\lambda)||^2+K  \lambda^4.
\end{equation}
The fifth nonlinear  term can be treated as follows
\begin{equation}\label{eb16}
  \int (\tilde{v}^\lambda \cdot\nabla v)\tilde{v}^\lambda dx
  \leq K ||\nabla v||_{L^\infty}||\tilde{v}^\lambda||^2\leq K ||\tilde{v}^\lambda||^2.
\end{equation}
For the last nonlinear term, by Cauchy-Schwartz's inequality and
Sobolev's embedding $H^2(\mathbb{T}^3)\hookrightarrow
L^\infty(\mathbb{T}^3)$, we get
\begin{align}\label{eb17}
 \lambda^2\int\tilde{v}^\lambda\tilde{E}^\lambda{\rm div}\tilde{E}^\lambda dx
& \leq \frac12 ||\tilde{v}^\lambda||^2 +\frac12\lambda^4 ||\tilde{E}^\lambda{\rm div}\tilde{E}^\lambda||^2\nonumber\\
& \leq \frac12 ||\tilde{v}^\lambda||^2 +\frac12\lambda^4 ||\tilde{E}^\lambda||^2_{L^\infty}||{\rm div}\tilde{E}^\lambda||^2\nonumber\\
& \leq \frac12 ||\tilde{v}^\lambda||^2 +\frac12\lambda^4 ||\tilde{E}^\lambda||^2_{H^2}||{\rm div}\tilde{E}^\lambda||^2\nonumber\\
&\leq \frac12 ||\tilde{v}^\lambda||^2 +\frac12
|||\tilde{\mathbf{w}}^\lambda|||^4.
  \end{align}
Thus, putting \eqref{eb14}-\eqref{eb17} together, we get
\begin{align}\label{eb18}
\frac{d}{dt}||\tilde{v}^\lambda||^2+2\mu ||\nabla
\tilde{v}^\lambda||^2
\leq & K  ||(\tilde{v}^\lambda,\tilde{E}^\lambda)||^2\nonumber\\
&+ K  \lambda^2||(\tilde{v}^\lambda,\tilde{E}^\lambda, {\rm
div}\tilde{E}^\lambda)||^2
 +  |||\tilde{\mathbf{w}}^\lambda|||^4+K
 \lambda^4.
\end{align}

Combining \eqref{eb8} and  \eqref{eb13} with \eqref{eb18},  and restricting $\lambda $ small enough, we get
\begin{align}\label{eb19}
&\frac{d}{dt}\Big(\delta_1 ||\tilde{z}^\lambda||^2 +\delta_2||\tilde{v}^\lambda||^2
+\lambda^2 ||\tilde{E}^\lambda||^2\Big) +c_1\delta_1||\nabla \tilde{z}^\lambda||^2\nonumber\\
& \quad \  \ + 2\mu \delta_2||\nabla
\tilde{v}^\lambda||^2+\big(2\lambda^2-K(\lambda^2\delta_2+\lambda^4\delta_1)\big)||{\rm
div}\tilde{E}^\lambda||^2\nonumber\\
&  \quad \  \ +
\big(c_2-K(\delta_1+\delta_2)-K(\lambda^2\delta_2+\lambda^4\delta_1)\big)||\tilde{E}^\lambda||^2\nonumber\\
& \quad \leq K_1||(\tilde{z}^\lambda,
\tilde{v}^\lambda)||^2+K_1|||\tilde{\mathbf{w}}^\lambda|||^4+K_1
 \lambda^4
\end{align}
for some $\delta_1$ and $\delta_2$ sufficient small, which gives the
inequality \eqref{eb3}.
\end{proof}

Next, we estimate the $L^\infty_t(L^2_x)$ norm of $(\tilde{z_t}^\lambda,
\tilde{v_t}^\lambda, \tilde{E_t}^\lambda)$ by using the system \eqref{ea22}-\eqref{ea25}.
\begin{lem}\label{L3}
   Under the assumptions of Theorem \ref{th}, we have
   \begin{align}\label{eb20}
&||\tilde{z}^\lambda_t||^2 +||\tilde{v}^\lambda_t||^2 +\lambda^2||\tilde{E}^\lambda_t||^2 \nonumber\\
&\quad + \int^t_0 \big(||\nabla \tilde{z}^\lambda_t||^2+||\nabla\tilde{v}^\lambda_t||^2  +||\tilde{E}^\lambda_t||^2+\lambda^2
||{\rm div}\tilde{E}^\lambda_t||^2
 \big)(s)ds\nonumber\\
& \leq
 K(||\tilde{z}^\lambda_t||^2 +||\tilde{v}^\lambda_t||^2 +\lambda^2||\tilde{E}^\lambda_t||^2)(t=0)
   \nonumber\\
 & \quad+K\int^t_0\big(||(\tilde{z}^\lambda, \tilde{z}^\lambda_t,\tilde{v}^\lambda, \tilde{v}^\lambda_t,
 \nabla\tilde{v}^\lambda,
 \tilde{E}^\lambda, {\rm div} \tilde{E}^\lambda )||^2\big)(s)ds\nonumber\\
& \quad  +
 K\int^t_0 \big\{ |||\tilde{\mathbf{w}}^\lambda|||^4
+|||\tilde{\mathbf{w}}^\lambda|||^2 ||\tilde{E}^\lambda_t||^2 \big\}(s)ds+K\lambda^4.
  \end{align}
\end{lem}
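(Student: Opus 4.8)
The plan is to differentiate the equivalent error system \eqref{ea22}--\eqref{ea25} once in $t$ and then reproduce, essentially line by line, the three energy estimates of Lemma \ref{L2}. Precisely: differentiating \eqref{ea22} in $t$ and testing against $\tilde z^\lambda_t$ gives $\frac12\frac{d}{dt}\|\tilde z^\lambda_t\|^2+\|\nabla\tilde z^\lambda_t\|^2$ on the left, as in \eqref{eb4}; differentiating \eqref{ea23} in $t$ and testing against $-\tilde\Phi^\lambda_t$ (recall $\tilde E^\lambda_t=-\nabla\tilde\Phi^\lambda_t$) gives $\frac{\lambda^2}{2}\frac{d}{dt}\|\tilde E^\lambda_t\|^2+\lambda^2\|\text{div}\tilde E^\lambda_t\|^2+\int Z|\tilde E^\lambda_t|^2\,dx$, as in \eqref{eb9}, together with one new lower-order term $\int Z_t\,\tilde E^\lambda\cdot\tilde E^\lambda_t\,dx$ coming from $\partial_t\text{div}(Z\tilde E^\lambda)$; and differentiating \eqref{ea24} in $t$, testing against $\tilde v^\lambda_t$, using $\text{div}\,\tilde v^\lambda_t=0$ together with $\int(\tilde v^\lambda\cdot\nabla\tilde v^\lambda_t)\tilde v^\lambda_t\,dx=\int(v\cdot\nabla\tilde v^\lambda_t)\tilde v^\lambda_t\,dx=0$ and $\int\nabla\tilde\pi^\lambda_t\cdot\tilde v^\lambda_t\,dx=0$, gives $\frac12\frac{d}{dt}\|\tilde v^\lambda_t\|^2+\mu\|\nabla\tilde v^\lambda_t\|^2$, as in \eqref{eb14}. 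The coercivity $\int Z|\tilde E^\lambda_t|^2\,dx\ge\kappa_0\|\tilde E^\lambda_t\|^2$, from the hypothesis $n+p\ge\kappa_0>0$, is what controls $\|\tilde E^\lambda_t\|$; the initial values $\tilde z^\lambda_t(0),\tilde v^\lambda_t(0),\tilde E^\lambda_t(0)$ on the right of \eqref{eb20} are read off from the system at $t=0$.

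Bounding the right-hand sides repeats the mechanism of \eqref{eb5}--\eqref{eb7}, \eqref{eb10}--\eqref{eb12}, and \eqref{eb15}--\eqref{eb17}. Every commutator of $\partial_t$ with a smooth coefficient ($D,\mathcal{E},v,Z$) or with one of the explicit $O(\lambda^2)$ sources is again a smooth, $T$-uniformly bounded function — now carrying one extra time derivative of $D,\mathcal{E},v,Z$, which is legitimate since the limit solution is smooth on $[0,T]$ — multiplied by one of $\tilde z^\lambda,\tilde z^\lambda_t,\tilde v^\lambda,\tilde v^\lambda_t,\nabla\tilde v^\lambda,\tilde E^\lambda,\text{div}\tilde E^\lambda,\tilde E^\lambda_t,\text{div}\tilde E^\lambda_t$; by Cauchy--Schwarz these are absorbed into $\epsilon\big(\|\nabla\tilde z^\lambda_t\|^2+\|\nabla\tilde v^\lambda_t\|^2+\lambda^2\|\text{div}\tilde E^\lambda_t\|^2+\|\tilde E^\lambda_t\|^2\big)$ plus $K_\epsilon\|(\tilde z^\lambda,\tilde z^\lambda_t,\tilde v^\lambda,\tilde v^\lambda_t,\nabla\tilde v^\lambda,\tilde E^\lambda,\text{div}\tilde E^\lambda)\|^2+K_\epsilon\lambda^4$, the $\lambda^4$ collecting the squared $O(\lambda^2)$ sources. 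The nonlinear terms — the $t$-derivatives of $\text{div}(\tilde z^\lambda\tilde v^\lambda)$, $\text{div}(\tilde z^\lambda\tilde E^\lambda)$, $\lambda^2\text{div}(\tilde E^\lambda\text{div}\tilde E^\lambda)$, $\tilde v^\lambda\cdot\nabla\tilde v^\lambda$, $\lambda^2\tilde E^\lambda\text{div}\tilde E^\lambda$, and so on — are treated exactly as in \eqref{eb6},\eqref{eb7},\eqref{eb11},\eqref{eb12},\eqref{eb17}: one places the factor lying in $H^2$ (namely $\tilde v^\lambda$ or $\tilde E^\lambda$) in $L^\infty$ via $H^2(\mathbb{T}^3)\hookrightarrow L^\infty(\mathbb{T}^3)$, keeps a derivative of $\tilde z^\lambda_t$ or $\tilde v^\lambda_t$ inside an $\epsilon$-term, and estimates the rest by Lemma \ref{L1}. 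The pieces quadratic in $\tilde z^\lambda_t$ or $\tilde v^\lambda_t$ close as $|||\tilde{\mathbf{w}}^\lambda|||^4$, since $\|\tilde z^\lambda_t\|^2,\|\tilde v^\lambda_t\|^2\le|||\tilde{\mathbf{w}}^\lambda|||^2$; the pieces carrying a factor $\tilde E^\lambda_t$, whose $L^2$ norm the weighted norm \eqref{eb1} controls only up to a factor $\lambda^{-1}$, must instead be kept in the weaker form $|||\tilde{\mathbf{w}}^\lambda|||^2\|\tilde E^\lambda_t\|^2$ (Young's inequality being used to throw any harmless bare $\frac12\|\tilde E^\lambda_t\|^2$ back onto $\kappa_0\|\tilde E^\lambda_t\|^2$).

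Finally one forms $\delta_1\times(\text{the }\tilde z^\lambda_t\text{-inequality})+\delta_2\times(\text{the }\tilde v^\lambda_t\text{-inequality})+(\text{the }\tilde E^\lambda_t\text{-inequality})$ with $\delta_1,\delta_2$ and $\lambda$ small, exactly as in the passage from \eqref{eb8}, \eqref{eb13}, \eqref{eb18} to \eqref{eb19}: the coercive sum $c\,\delta_1\|\nabla\tilde z^\lambda_t\|^2+c\mu\,\delta_2\|\nabla\tilde v^\lambda_t\|^2+\lambda^2\|\text{div}\tilde E^\lambda_t\|^2+\kappa_0\|\tilde E^\lambda_t\|^2$ dominates all the $\epsilon$-error terms, and integrating over $[0,t]$ gives \eqref{eb20}. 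The one genuinely new point over Lemma \ref{L2} is the bookkeeping for the cross terms generated when $\partial_t$ hits a nonlinear factor, and in particular isolating those that can only be closed as $|||\tilde{\mathbf{w}}^\lambda|||^2\|\tilde E^\lambda_t\|^2$ rather than $|||\tilde{\mathbf{w}}^\lambda|||^4$; this is precisely the source of the extra $\int_0^t\Gamma^\lambda G^\lambda$ term in \eqref{ns16}, and it is the step I expect to be the main (delicate rather than conceptually hard) obstacle. The remaining linear terms present no difficulty beyond length.
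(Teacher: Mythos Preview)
Your proposal is correct and follows essentially the same approach as the paper: differentiate each of \eqref{ea22}--\eqref{ea24} in $t$, test against $\tilde z^\lambda_t$, $-\tilde\Phi^\lambda_t$, $\tilde v^\lambda_t$ respectively, use the coercivity $Z\ge\kappa_0$, handle linear terms by Cauchy--Schwarz and nonlinear ones by $H^2\hookrightarrow L^\infty$ and Lemma~\ref{L1}, and combine with small weights $\delta_3,\delta_4$ on the $\tilde z^\lambda_t$- and $\tilde v^\lambda_t$-inequalities. You have also correctly singled out the one genuinely new structural point, namely that the cross term $\|\tilde z^\lambda\|_{H^2}^2\|\tilde E^\lambda_t\|^2$ (from $\partial_t(\tilde z^\lambda\tilde E^\lambda)$) cannot be closed as $|||\tilde{\mathbf w}^\lambda|||^4$ and must be kept as $|||\tilde{\mathbf w}^\lambda|||^2\|\tilde E^\lambda_t\|^2$; this is exactly how the paper treats it in \eqref{eb27}--\eqref{eb28}.
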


\begin{proof}
  Differentiating \eqref{ea22} with respect to $t$, multiplying the resulting equation by $\tilde{z}^\lambda_t$ and
  integrating it over $\mathbb{T}^3$ with respect to $x$, we get
  \begin{align}\label{eb21}
&\frac12\frac{d}{dt}||\tilde{z}^\lambda_t||^2+||\nabla \tilde{z}^\lambda_t||^2\nonumber\\
& \quad= -\int  D \tilde{E}^\lambda_t \nabla \tilde{z}^\lambda_t dx +\lambda^2 \int \partial_t(\mathcal{E}{\rm div}\mathcal{E})
\nabla \tilde{z}^\lambda_t dx+\int \partial_t( \tilde{z}^\lambda v)\nabla \tilde{z}^\lambda_t dx\nonumber\\
&\quad  \ \ + \lambda^2\int \partial_t(\mathcal{E}{\rm div}\tilde{E}^\lambda +\tilde{E}^\lambda {\rm div}\mathcal{E})
\nabla \tilde{z}^\lambda_t dx+\int \partial_t(Z\tilde{v}^\lambda)\nabla \tilde{z}^\lambda_t dx\nonumber\\
&\quad  \ \
+\lambda^2 \int \partial_t(\tilde{E}^\lambda {\rm div}\tilde{E}^\lambda)\nabla \tilde{z}^\lambda_t dx
  +\int \partial_t( \tilde{z}^\lambda   \tilde{v}^\lambda)\nabla \tilde{z}^\lambda_t dx.
  \end{align}
We estimate the terms on the right-hand side of \eqref{eb21}. For the first five terms,
by   Cauchy-Schwartz's inequality and
using the regularity of $D,\mathcal{E}, v  $ and $Z$, which can be
bounded by
\begin{align}\label{eb22}
& \epsilon ||\nabla \tilde{z}^\lambda_t||^2 +K_\epsilon ||  \tilde{E}^\lambda_t||^2
+K_\epsilon||(\tilde{v}^\lambda, \tilde{v}^\lambda_t)||^2\nonumber\\
&\quad +K_\epsilon ||(\tilde{z}^\lambda, \tilde{z}^\lambda_t)||^2+
K_\epsilon\lambda^4||(\tilde{E}^\lambda, \tilde{E}^\lambda_t, {\rm div}\tilde{E}^\lambda, {\rm div}\tilde{E}^\lambda_t)||^2+K_\epsilon\lambda^4.
\end{align}
For the last two nonlinear terms, by Cauchy-Schwartz's inequality,
  Sobolev's embedding $H^2(\mathbb{T}^3)\hookrightarrow
L^\infty(\mathbb{T}^3)$, and the inequality \eqref{eb2},  we get
\begin{align}\label{eb23}
&   \lambda^2 \int \partial_t(\tilde{E}^\lambda {\rm div}\tilde{E}^\lambda)\nabla \tilde{z}^\lambda_t dx
 +\int \partial_t( \tilde{z}^\lambda   \tilde{v}^\lambda)\nabla \tilde{z}^\lambda_t dx \nonumber\\
\leq &  \epsilon ||\nabla \tilde{z}^\lambda_t||^2 +K_\epsilon \lambda^4 ||\partial_t(\tilde{E}^\lambda {\rm div}\tilde{E}^\lambda)||^2
+K_\epsilon||\partial_t(\tilde{z}^\lambda \tilde{v}^\lambda)||^2\nonumber\\
\leq &  \epsilon ||\nabla \tilde{z}^\lambda_t||^2
 +K_\epsilon \lambda^4 (||\tilde{E}^\lambda_t {\rm div}\tilde{E}^\lambda||^2
 +|| \tilde{E}^\lambda||^2_{L^\infty}||{\rm div}\tilde{E}^\lambda_t||^2)\nonumber\\
 &  +K_\epsilon(|| \tilde{z}^\lambda_t \tilde{v}^\lambda||^2+
 || \tilde{z}^\lambda \tilde{v}^\lambda_t||^2)\nonumber\\
\leq &  \epsilon ||\nabla \tilde{z}^\lambda_t||^2
 +K_\epsilon \lambda^4 (||\tilde{E}^\lambda_t||^2_{H^1} || {\rm div}\tilde{E}^\lambda||^2_{H^1}
 +|| \tilde{E}^\lambda||^2_{H^2}||{\rm div}\tilde{E}^\lambda_t||^2)\nonumber\\
 &  +K_\epsilon(|| \tilde{z}^\lambda_t||^2_{H^1}|| \tilde{v}^\lambda||^2_{H^1}+
 || \tilde{z}^\lambda||_{H^1}^2|| \tilde{v}^\lambda_t||^2_{H^1})\nonumber\\
\leq &  \epsilon ||\nabla \tilde{z}^\lambda_t||^2
 +K_\epsilon |||\tilde{\mathbf{w}}^\lambda|||^4.
  \end{align}
Putting \eqref{eb21}-\eqref{eb23} together and taking $
\epsilon$ small enough, we get
\begin{align}\label{eb24}
&\frac12\frac{d}{dt}||\tilde{z}^\lambda_t||^2+c_3||\nabla \tilde{z}^\lambda_t||^2\nonumber\\
& \quad \leq  K  ||  \tilde{E}^\lambda_t||^2
+K ||(\tilde{v}^\lambda, \tilde{v}^\lambda_t)||^2+K  ||(\tilde{z}^\lambda, \tilde{z}^\lambda_t)||^2\nonumber\\
&\quad\ \ \ +
K \lambda^4||(\tilde{E}^\lambda, \tilde{E}^\lambda_t, {\rm div}\tilde{E}^\lambda, {\rm div}\tilde{E}^\lambda_t)||^2
+K |||\tilde{\mathbf{w}}^\lambda|||^4+K \lambda^4.
\end{align}

Differentiating \eqref{ea23} with respect to $t$, multiplying the resulting equation by $-\tilde{\Phi}^\lambda_t$ and
  integrating it over $\mathbb{T}^3$ with respect to $x$, we get
  \begin{align}\label{eb25}
& \quad \frac{\lambda^2}{2}\frac{d}{dt} ||\tilde{E}^\lambda_t||^2
+ \lambda^2||{\rm div} \tilde{E}^\lambda_t||^2+\int Z |\tilde{E}^\lambda_t|^2 dx\nonumber\\
& \quad  =-\int Z_t \tilde{E}^\lambda \tilde{E}^\lambda_t dx-\lambda^2\int \partial_t(\partial_t \mathcal{E}-\Delta
\mathcal{E})\tilde{E}^\lambda_t dx-\int \partial_t(\mathcal{E}\tilde{z}^\lambda)
\tilde{E}^\lambda_t dx \nonumber\\
&\quad \ \ -\lambda^2\int \partial_t({\rm div}\mathcal{E}
\tilde{v}^\lambda)\tilde{E}^\lambda_t dx
 -\lambda^2 \int \partial_t(v{\rm div}\mathcal{E})
\tilde{E}^\lambda_t dx+\int \partial_t(D \tilde{v}^\lambda )
\tilde{E}^\lambda_t dx\nonumber\\
& \quad \ \ -\lambda^2 \int \partial_t(v
{\rm div}\tilde{E}^\lambda )\tilde{E}^\lambda_t dx -\lambda^2 \int \partial_t(\tilde{v}^\lambda {\rm div} \tilde{E}^\lambda)
{\rm div }\tilde{E}^\lambda_t dx- \int \partial_t(\tilde{z}^\lambda
 \tilde{E}^\lambda ) \tilde{E}^\lambda_t dx.
\end{align}
For the first seven terms on the right-hand side of \eqref{eb25}, by   Cauchy-Schwartz's inequality and
using the regularity of $\mathcal{E}, v,D $ and $Z$, which can be
bounded by
\begin{align}\label{eb26}
& \epsilon ||  \tilde{E}^\lambda_t||^2
 +K_\epsilon ||\tilde{E}^\lambda||^2+K_\epsilon ||(\tilde{z}^\lambda, \tilde{z}^\lambda_t)||^2+K_\epsilon ||(\tilde{v}^\lambda, \tilde{v}^\lambda_t)||^2
 \nonumber\\
&\qquad+K_\epsilon\lambda^4||(\tilde{v}^\lambda,
\tilde{v}^\lambda_t)||^2+ K_\epsilon\lambda^4||(  {\rm
div}\tilde{E}^\lambda, {\rm
div}\tilde{E}^\lambda_t)||^2+K_\epsilon\lambda^4.
\end{align}
For the last two nonlinear terms on the right-hand side of \eqref{eb25}  by Cauchy-Schwartz's inequality,
  Sobolev's embedding $H^2(\mathbb{T}^3)\hookrightarrow
L^\infty(\mathbb{T}^3)$, and the inequality \eqref{eb2}, they can be
estimated as follows
\begin{align}\label{eb27}
& -\lambda^2 \int \partial_t(\tilde{v}^\lambda {\rm div } \tilde{E}^\lambda)
\tilde{E}^\lambda_t dx- \int \partial_t(\tilde{z}^\lambda
 \tilde{E}^\lambda ) \tilde{E}^\lambda_t dx\nonumber\\
 & \quad \leq \epsilon ||  \tilde{E}^\lambda_t||^2 +K_\epsilon\lambda^4  ||\partial_t(\tilde{v}^\lambda  {\rm div }\tilde{E}^\lambda)||^2
 +K_\epsilon||\partial_t(\tilde{z}^\lambda
 \tilde{E}^\lambda )||^2
 \nonumber\\
& \quad \leq \epsilon ||  \tilde{E}^\lambda_t||^2
+K_\epsilon \lambda^4(||\tilde{v}^\lambda_t||^2_{H^1}
|| {\rm div } \tilde{E}^\lambda||^2_{H^1}+||\tilde{v}^\lambda ||^2_{L^\infty}|| {\rm div }\tilde{E}^\lambda_t||^2)\nonumber\\
& \quad  \ \  \ +K_\epsilon (||\tilde{z}^\lambda_t||^2_{H^1}
 ||\tilde{E}^\lambda||^2_{H^1}+||\tilde{z}^\lambda||^2_{L^\infty}
 ||\tilde{E}^\lambda_t||^2)
 \nonumber\\
 & \quad \leq \epsilon ||  \tilde{E}^\lambda_t||^2
+K_\epsilon \lambda^4(||\tilde{v}^\lambda_t||^2_{H^1}
|| {\rm div } \tilde{E}^\lambda||^2_{H^1}+||\tilde{v}^\lambda ||^2_{H^2}|| {\rm div }\tilde{E}^\lambda_t||^2)\nonumber\\
& \quad  \ \  \ +K_\epsilon (||\tilde{z}^\lambda_t||^2_{H^1}
 ||\tilde{E}^\lambda||^2_{H^1}+||\tilde{z}^\lambda||^2_{H^2}
 ||\tilde{E}^\lambda_t||^2)
 \nonumber\\
  &\quad \leq \epsilon ||  \tilde{E}^\lambda_t||^2
  +K_\epsilon\big(|||\tilde{\mathbf{w}}^\lambda|||^4+|||\tilde{\mathbf{w}}^\lambda|||^2
  ||\tilde{E}^\lambda_t||^2\big).
\end{align}
Putting \eqref{eb25}-\eqref{eb27} together, using the positivity of $Z$,  and taking $
\epsilon$ small enough, we get
\begin{align}\label{eb28}
&  {\lambda^2} \frac{d}{dt} ||\tilde{E}^\lambda_t||^2+ 2\lambda^2||{\rm div} \tilde{E}^\lambda_t||^2
+c_4||\tilde{E}^\lambda_t||^2  \nonumber\\
& \quad \leq
K||(\tilde{z}^\lambda,\tilde{z}^\lambda_t,\tilde{v}^\lambda,\tilde{v}^\lambda_t,\tilde{E}^\lambda)||^2
+K\lambda^4||({\rm div}\tilde{E}^\lambda,{\rm div}\tilde{E}^\lambda_t)||^2
+K\lambda^4||(\tilde{v}^\lambda,\tilde{v}^\lambda_t)||^2\nonumber\\
&\quad \ \ +K\big( |||\tilde{\mathbf{w}}^\lambda|||^4
+|||\tilde{\mathbf{w}}^\lambda|||^2||\tilde{E}^\lambda_t||^2 \big)+K\lambda^4.
\end{align}

Differentiating \eqref{ea24} with respect to $t$, multiplying the resulting equation by $\tilde{v}^\lambda_t$,
  integrating it over $\mathbb{T}^3$ with respect to $x$ and using $\text{div} \tilde{v}^\lambda_t=0$, we get
  \begin{align}\label{eb29}
  &\frac12\frac{d}{dt}||\tilde{v}^\lambda_t||^2 +\mu ||\nabla \tilde{v}^\lambda_t||^2\nonumber\\
  & \quad = -\int \partial_t(D \tilde{E}^\lambda)\tilde{v}^\lambda_t dx
  +\lambda^2 \int \partial_t (\mathcal{E}{\rm div}\mathcal{E})\tilde{v}^\lambda_t dx
+\lambda^2 \int \partial_t (\mathcal{E}{\rm div}\tilde{E}^\lambda)\tilde{v}^\lambda_t dx\nonumber\\
& \quad\  \ \ +\lambda^2 \int \partial_t (\tilde{E}{\rm div}\mathcal{E})\tilde{v}^\lambda_t dx
  -\int \partial_t( {v}\cdot \nabla \tilde{v}^\lambda)\tilde{v}^\lambda_t dx
  -\int \partial_t(\tilde{v}^\lambda\cdot \nabla  {v} )\tilde{v}^\lambda_t dx\nonumber\\
  & \quad\  \ \ -\int \partial_t(\tilde{v}^\lambda\cdot \nabla \tilde{v}^\lambda)\tilde{v}^\lambda_t dx
+\lambda^2 \int \partial_t (\tilde{E}^\lambda{\rm div}\tilde{E}^\lambda)\tilde{v}^\lambda_t dx.
   \end{align}
We estimate the terms on the right-hand side of \eqref{eb29}. By  Cauchy-Schwartz's inequality and
using the regularity of $D$ and $\mathcal{E}$,  we get
\begin{align}\label{eb31}
& -\int \partial_t(D \tilde{E}^\lambda)\tilde{v}^\lambda_t dx
  +\lambda^2 \int \partial_t (\mathcal{E}{\rm div}\mathcal{E})\tilde{v}^\lambda_t dx\nonumber\\
&+\lambda^2 \int \partial_t (\mathcal{E}{\rm div}\tilde{E}^\lambda)\tilde{v}^\lambda_t dx
+\lambda^2 \int \partial_t (\tilde{E}{\rm div}\mathcal{E})\tilde{v}^\lambda_t dx\nonumber\\
& \quad \leq K(||\tilde{v}^\lambda_t||^2 +||\tilde{E}^\lambda||^2+||\tilde{E}^\lambda_t||^2)
+K \lambda^4 ||({\rm div}\tilde{E}^\lambda, {\rm div}\tilde{E}^\lambda_t)||^2\nonumber\\
& \quad \ \
+K\lambda^4||(\tilde{E}^\lambda, \tilde{E}^\lambda_t)||^2+K\lambda^4.
\end{align}
Now we deal with the trilinear terms involving $\tilde{v}^\lambda,\tilde{v}^\lambda_t$, ${v}$, and ${v}_t$.
Using the
  identities
$$
\int (\tilde{v}^\lambda \cdot \nabla
\tilde{v}^\lambda_t)\tilde{v}^\lambda_t dx=0,\quad \int ({v}  \cdot
\nabla \tilde{v}^\lambda_t)\tilde{v}^\lambda_t dx=0,
$$
we have
\begin{align}\label{eb300}
&\quad  -\int \partial_t(\tilde{v}^\lambda\cdot \nabla \tilde{v}^\lambda)\tilde{v}^\lambda_t dx
  -\int \partial_t( {v}\cdot \nabla \tilde{v}^\lambda)\tilde{v}^\lambda_t dx
  -\int \partial_t(\tilde{v}^\lambda\cdot \nabla  {v} )\tilde{v}^\lambda_t dx\nonumber\\
& =    -\int (\tilde{v}^\lambda_t\cdot \nabla \tilde{v}^\lambda)\tilde{v}^\lambda_t dx
  -\int ( {v}_t\cdot \nabla \tilde{v}^\lambda)\tilde{v}^\lambda_t dx
  -\int (\tilde{v}^\lambda_t\cdot \nabla  {v} )\tilde{v}^\lambda_t dx
  -\int (\tilde{v}^\lambda\cdot \nabla  {v}_t )\tilde{v}^\lambda_t dx.
\end{align}
By  Cauchy-Schwartz's inequality, using the regularity of $ {v}$
and the inequality \eqref{eb2},  we get
\begin{align}
   -\int (\tilde{v}^\lambda_t\cdot \nabla \tilde{v}^\lambda)\tilde{v}^\lambda_t dx
   &\leq \frac12 || \tilde{v}^\lambda_t||^2
    +\frac{ 1}{2} ||\tilde{v}^\lambda_t\cdot \nabla \tilde{v}^\lambda||^2\nonumber\\
   & \leq \frac12 || \tilde{v}^\lambda_t||^2+
    \frac{ K}{2} ||\tilde{v}^\lambda_t||^2_{H^1}|| \nabla \tilde{v}^\lambda||^2_{H^1}\nonumber\\
&  \leq \frac12 || \tilde{v}^\lambda_t||^2+K |||\tilde{\mathbf{w}}^\lambda|||^4,\label{eb301}\\
  -\int ( {v}_t\cdot \nabla \tilde{v}^\lambda)\tilde{v}^\lambda_t dx
   &\leq \frac12 || \tilde{v}^\lambda_t||^2
    +\frac{ 1}{2} || {v}_t\cdot \nabla \tilde{v}^\lambda||^2\nonumber\\
 &\leq   \frac12 || \tilde{v}^\lambda_t||^2+K|| \nabla \tilde{v}^\lambda||^2, \label{eb302}\\
    -\int (\tilde{v}^\lambda_t\cdot \nabla  {v} )\tilde{v}^\lambda_t dx
  &  \leq K ||\nabla v||_{L^\infty}||\tilde{v}^\lambda_t||^2\leq K ||\tilde{v}^\lambda_t||^2,\label{eb303}\\
  -\int (\tilde{v}^\lambda\cdot \nabla  {v}_t )\tilde{v}^\lambda_t dx
 &\leq \frac12 || \tilde{v}^\lambda_t||^2
    +\frac{ 1}{2} ||\tilde{v}^\lambda\cdot \nabla {v}_t||^2\nonumber\\
&  \leq \frac12 || \tilde{v}^\lambda_t||^2+K ||\tilde{v}^\lambda
||^2.\label{eb304}
\end{align}
The last nonlinear term can be treated similarly as \eqref{eb23}
\begin{align}\label{eb32}
  \lambda^2 \int \partial_t (\tilde{E}^\lambda{\rm div}\tilde{E}^\lambda)\tilde{v}^\lambda_t dx
\leq & \frac12 ||\tilde{v}^\lambda_t||^2+\frac12 \lambda^4 ||\partial_t(\tilde{E}^\lambda {\rm div}\tilde{E}^\lambda)||^2\nonumber\\
\leq & \frac12 ||\tilde{v}^\lambda_t||^2+\frac12 \lambda^4 (||\tilde{E}^\lambda_t {\rm div}\tilde{E}^\lambda||^2
 +|| \tilde{E}^\lambda||^2_{L^\infty}||{\rm div}\tilde{E}^\lambda_t||^2)\nonumber\\
\leq &  \frac12 ||\tilde{v}^\lambda_t||^2+\frac12 \lambda^4 (||\tilde{E}^\lambda_t||^2_{H^1} || {\rm div}\tilde{E}^\lambda||^2_{H^1}
 +|| \tilde{E}^\lambda||^2_{H^2}||{\rm div}\tilde{E}^\lambda_t||^2)\nonumber\\
\leq & \frac12 ||\tilde{v}^\lambda_t||^2+K|||\tilde{\mathbf{w}}^\lambda|||^4.
\end{align}
Putting \eqref{eb29}-\eqref{eb32} together, we have
\begin{align}\label{eb33}
& \frac{d}{dt}||\tilde{v}^\lambda_t||^2+\mu ||\nabla \tilde{v}^\lambda_t||^2\nonumber\\
&\quad \leq
K||(\tilde{v}^\lambda,\nabla\tilde{v}^\lambda,\tilde{v}^\lambda_t,\tilde{E}^\lambda,\tilde{E}^\lambda_t)||^2
+K\lambda^4||(\tilde{E}^\lambda, \tilde{E}^\lambda_t, {\rm div}\tilde{E}^\lambda, {\rm div}\tilde{E}^\lambda_t)||^2\nonumber\\
& \quad \ \ +K|||\tilde{\mathbf{w}}^\lambda|||^4+K\lambda^4.
\end{align}

Combining \eqref{eb24}, \eqref{eb28} and \eqref{eb33},  and restricting $\lambda $ small enough, we get
\begin{align}\label{eb34}
 & \frac{d}{dt}(\delta_3||\tilde{z}^\lambda_t||^2+ \lambda^2||\tilde{E}^\lambda_t||^2+\delta_4||\tilde{v}^\lambda_t||^2)\nonumber\\
& \ \  + 2\delta_3c_3 ||\nabla\tilde{z}^\lambda_t||^2+\mu
\delta_4||\nabla
\tilde{v}^\lambda_t||^2+c_5||\tilde{E}^\lambda_t||^2
+c_6\lambda^2||{\rm div}\tilde{E}^\lambda_t||^2\nonumber\\
&\quad \leq K_2||(\tilde{v}^\lambda,\nabla\tilde{v}^\lambda,
\tilde{v}^\lambda_t,\tilde{z}^\lambda,\tilde{z}^\lambda_t,\tilde{E}^\lambda,
{\rm div}\tilde{E}^\lambda)||^2\nonumber\\
&\quad \ \
  +K_2\big( |||\tilde{\mathbf{w}}^\lambda|||^4
+|||\tilde{\mathbf{w}}^\lambda|||^2||\tilde{E}^\lambda_t||^2 \big)+K_2\lambda^4.
\end{align}
for some $\delta_3$ and $\delta_4$ sufficient small, which gives the inequality \eqref{eb20}.
 \end{proof}

Using Lemma \ref{L3}, we can obtain the $L^\infty_t(L^2_x)$ norm of
 $(\nabla \tilde{z}^\lambda,\nabla \tilde{v}^\lambda, \tilde{E}^\lambda,\lambda{\rm div}\tilde{E}^\lambda)$.
\begin{lem}\label{L4}
Under the assumptions of Theorem \ref{th}, we have
   \begin{align}\label{eb35}
& ||(\nabla \tilde{z}^\lambda,\nabla \tilde{v}^\lambda, \tilde{E}^\lambda)||^2
+\lambda^2||{\rm div}\tilde{E}^\lambda||^2\nonumber\\
& \quad \leq K||(\tilde{z}^\lambda,\tilde{z}^\lambda_t,\tilde{v}^\lambda,\tilde{v}^\lambda_t)||^2
+K\lambda^2||\tilde{E}^\lambda_t||^2+K|||\tilde{\mathbf{w}}^\lambda|||^4+K\lambda^4.
\end{align}
\end{lem}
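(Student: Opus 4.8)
The plan is to use the error equations \eqref{ea22}, \eqref{ea23} and \eqref{ea24} as \emph{stationary} relations at each fixed time $t$. Concretely, I would repeat the same $L^2$ testings that led to \eqref{eb4}, \eqref{eb9} and \eqref{eb14}, but instead of assembling the total time derivatives $\tfrac12\tfrac{d}{dt}||\tilde z^\lambda||^2$, $\tfrac{\lambda^2}{2}\tfrac{d}{dt}||\tilde E^\lambda||^2$, $\tfrac12\tfrac{d}{dt}||\tilde v^\lambda||^2$ I would keep the underlying integrals $\int\tilde z^\lambda_t\tilde z^\lambda\,dx$, $-\lambda^2\int\tilde E^\lambda_t\cdot\tilde E^\lambda\,dx$, $\int\tilde v^\lambda_t\cdot\tilde v^\lambda\,dx$ on the right-hand side and bound them directly by the Cauchy-Schwartz inequality. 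This trades a differential inequality for a pointwise-in-$t$ estimate in which $\tilde z^\lambda_t$, $\tilde v^\lambda_t$ and (in the weighted form $\lambda^2||\tilde E^\lambda_t||^2$) $\tilde E^\lambda_t$ enter only as harmless source terms, all admissible on the right of \eqref{eb35}.

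\emph{Electric field.} Testing \eqref{ea23} against $-\tilde\Phi^\lambda$ exactly as in the derivation of \eqref{eb9}, but retaining $-\lambda^2\int\tilde E^\lambda_t\cdot\tilde E^\lambda\,dx$ on the right, produces on the left $\lambda^2||{\rm div}\tilde E^\lambda||^2+\int Z|\tilde E^\lambda|^2\,dx$ and on the right this integral together with the eight terms already bounded in \eqref{eb10}--\eqref{eb12}. Since $-\lambda^2\int\tilde E^\lambda_t\cdot\tilde E^\lambda\,dx\le\epsilon||\tilde E^\lambda||^2+K_\epsilon\lambda^2||\tilde E^\lambda_t||^2$, since the hypothesis $n+p\ge\kappa_0>0$ gives $\int Z|\tilde E^\lambda|^2\,dx\ge\kappa_0||\tilde E^\lambda||^2$ which absorbs all the $\epsilon||\tilde E^\lambda||^2$ contributions, and since for $\lambda$ small the $\lambda^4||{\rm div}\tilde E^\lambda||^2$ terms appearing in \eqref{eb10}--\eqref{eb12} are absorbed into the good term $\lambda^2||{\rm div}\tilde E^\lambda||^2$, one arrives at
$$||\tilde E^\lambda||^2+\lambda^2||{\rm div}\tilde E^\lambda||^2\le K||(\tilde z^\lambda,\tilde v^\lambda)||^2+K\lambda^2||\tilde E^\lambda_t||^2+K|||\tilde{\mathbf{w}}^\lambda|||^4+K\lambda^4.$$

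\emph{Gradients.} Testing \eqref{ea22} against $\tilde z^\lambda$ as in \eqref{eb4}, but keeping $\int\tilde z^\lambda_t\tilde z^\lambda\,dx\le\tfrac12||\tilde z^\lambda_t||^2+\tfrac12||\tilde z^\lambda||^2$ on the right and bounding the seven remaining terms exactly as in \eqref{eb5}--\eqref{eb7}, one obtains after taking $\epsilon$ small a bound for $||\nabla\tilde z^\lambda||^2$ by $K||(\tilde z^\lambda,\tilde z^\lambda_t,\tilde v^\lambda)||^2+K||\tilde E^\lambda||^2+K\lambda^4||{\rm div}\tilde E^\lambda||^2+K|||\tilde{\mathbf{w}}^\lambda|||^4+K\lambda^4$. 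Likewise, testing \eqref{ea24} against $\tilde v^\lambda$ as in \eqref{eb14} --- the pressure integrates to zero since ${\rm div}\tilde v^\lambda=0$ and the self- and mean-convection terms drop out because $\int(\tilde v^\lambda\cdot\nabla\tilde v^\lambda)\cdot\tilde v^\lambda\,dx=\int(v\cdot\nabla\tilde v^\lambda)\cdot\tilde v^\lambda\,dx=0$ --- but keeping $\int\tilde v^\lambda_t\cdot\tilde v^\lambda\,dx\le\tfrac12||\tilde v^\lambda_t||^2+\tfrac12||\tilde v^\lambda||^2$ on the right and estimating the remaining terms as in \eqref{eb15}--\eqref{eb17}, one obtains a bound for $\mu||\nabla\tilde v^\lambda||^2$ by $K||(\tilde v^\lambda,\tilde v^\lambda_t)||^2+K||\tilde E^\lambda||^2+K\lambda^2||{\rm div}\tilde E^\lambda||^2+K|||\tilde{\mathbf{w}}^\lambda|||^4+K\lambda^4$. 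Adding these two inequalities to the electric-field estimate above and using the latter to dominate the $||\tilde E^\lambda||^2$ and $\lambda^2||{\rm div}\tilde E^\lambda||^2$ (hence, for $\lambda<1$, also $\lambda^4||{\rm div}\tilde E^\lambda||^2$) occurring on their right-hand sides gives \eqref{eb35}.

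\emph{Main difficulty.} The one genuinely new ingredient is the \emph{pointwise}-in-$t$ control of $||\tilde E^\lambda||$. In Lemmas \ref{L2}--\ref{L3} the coercive quantity $\int Z|\tilde E^\lambda|^2$ always appeared paired with $\tfrac{\lambda^2}{2}\tfrac{d}{dt}||\tilde E^\lambda||^2$, so only $\int_0^t||\tilde E^\lambda||^2\,ds$ was under control; here one must refrain from forming that total time derivative, at the cost of the weighted term $K\lambda^2||\tilde E^\lambda_t||^2$ --- which is both permitted on the right of \eqref{eb35} and already estimated in Lemma \ref{L3} --- and it is exactly the strict positivity $n+p\ge\kappa_0$ that then turns $\int Z|\tilde E^\lambda|^2$ into honest coercivity for $||\tilde E^\lambda||$. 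Everything else is routine in view of Lemma \ref{L1} and the computations already performed for Lemma \ref{L2}.
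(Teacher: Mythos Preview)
Your proposal is correct and is essentially the paper's own argument, only reorganized. The paper takes the differential inequality \eqref{eb19} already assembled in Lemma~\ref{L2}, moves the term $-\frac{d}{dt}\big(\delta_1||\tilde z^\lambda||^2+\delta_2||\tilde v^\lambda||^2+\lambda^2||\tilde E^\lambda||^2\big)$ to the right, and bounds it by Cauchy--Schwartz to produce exactly the $||(\tilde z^\lambda,\tilde z^\lambda_t,\tilde v^\lambda,\tilde v^\lambda_t)||^2$ and $\lambda^2||\tilde E^\lambda_t||^2$ contributions you describe; you instead redo the three testings and refrain from ever forming the time derivatives, which amounts to the same computation. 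The only cosmetic difference is that the paper absorbs the $||\tilde E^\lambda||^2$ and $\lambda^2||{\rm div}\tilde E^\lambda||^2$ terms via the small weights $\delta_1,\delta_2$ built into \eqref{eb19}, whereas you first isolate the pointwise electric-field bound and then substitute it into the $\tilde z^\lambda$ and $\tilde v^\lambda$ estimates; both devices serve the same purpose and neither is shorter.
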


\begin{proof}
  It follows form \eqref{eb19} and Cauchy-Schwartz's inequality that
\begin{align*}
&  \   c_1\delta_1||\nabla \tilde{z}^\lambda||^2
  + \mu \delta_2||\nabla
\tilde{v}^\lambda||^2+\big(2\lambda^2-K(\lambda^2\delta_2+\lambda^4\delta_1)\big)||{\rm
div}\tilde{E}^\lambda||^2\nonumber\\
&+
\big(c_2-K(\delta_1+\delta_2)-K(\lambda^2\delta_2+\lambda^4\delta_1)\big)||\tilde{E}^\lambda||^2\nonumber\\
& \quad \leq -\frac{d}{dt}\Big(\delta_1 ||\tilde{z}^\lambda||^2 +\delta_2||\tilde{v}^\lambda||^2
+\lambda^2 ||\tilde{E}^\lambda||^2\Big)\\
& \quad\quad + K_1||(\tilde{z}^\lambda,
\tilde{v}^\lambda)||^2+K_1|||\tilde{\mathbf{w}}^\lambda|||^4+K_1
 \lambda^4\\
  & \quad \leq  K||(\tilde{z}^\lambda, \tilde{z}^\lambda_t,\tilde{v}^\lambda, \tilde{v}^\lambda_t)||^2+
 K\lambda^2||(\tilde{E}^\lambda, \tilde{E}^\lambda_t)||^2+
 K|||\tilde{\mathbf{w}}^\lambda|||^4+K
 \lambda^4,
\end{align*}
which gives \eqref{eb35} by using Lemma \ref{L3}.
\end{proof}

\subsection{High order estimates}
In this subsection we will establish the $L^\infty_t(L^2_x)$ of the
higher order spatial derivatives $(\Delta \tilde{z}^\lambda,\Delta
\tilde{v}^\lambda, \text{div}   \tilde{E}^\lambda, \lambda \nabla
\text{div}    \tilde{E}^\lambda)$.
\begin{lem}\label{L5}
Under the assumptions of Theorem \ref{th}, we have
\begin{align}\label{eb36}
 & ||\Delta \tilde{z}^\lambda ||^2+||\Delta \tilde{v}^\lambda ||^2+  ||{\rm div}\tilde{E}^\lambda||^2
 + \lambda^2 ||\nabla {\rm div}\tilde{E}^\lambda||^2\nonumber\\
 \leq &  K  ||(\tilde{z}^\lambda,\nabla\tilde{z}^\lambda, \nabla\tilde{z}^\lambda_t,
 \tilde{v}^\lambda,\nabla\tilde{v}^\lambda, \nabla\tilde{v}^\lambda_t,  \tilde{E}^\lambda)||^2\nonumber\\
 & +K\lambda^2 ||{\rm div} \tilde{E}^\lambda_t||^2+K|||\tilde{\mathbf{w}}^\lambda|||^4+K\lambda^4.
  \end{align}
\end{lem}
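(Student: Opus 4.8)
The plan is to read the error system \eqref{ea22}--\eqref{ea25}, at each fixed time, as three (quasi-)elliptic problems --- one producing $\Delta\tilde{z}^\lambda$, one producing $\Delta\tilde{v}^\lambda$, and one producing the pair $(\text{div}\tilde{E}^\lambda,\lambda\nabla\text{div}\tilde{E}^\lambda)$ --- to derive an inequality from each, and then to combine the three with a small weight on the first two, in the spirit of the combination leading to \eqref{eb19}.

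For $\Delta\tilde{z}^\lambda$ I would use \eqref{ea22} as an algebraic identity expressing $\Delta\tilde{z}^\lambda$ through $\tilde{z}^\lambda_t$, the data $D,\mathcal{E},Z,v$, the low-order quantities $\nabla\tilde{z}^\lambda,\tilde{v}^\lambda,\nabla\tilde{v}^\lambda,\text{div}\tilde{E}^\lambda$, and $\lambda^2$-small and quadratic remainders. Taking $L^2$ norms, expanding every divergence (and using $\text{div}v=\text{div}\tilde{v}^\lambda=0$ to discard the worst pieces), and estimating by Cauchy--Schwarz, the regularity of $D,\mathcal{E},Z,v$, the embedding $H^2(\mathbb{T}^3)\hookrightarrow L^\infty(\mathbb{T}^3)$ and Lemma \ref{L1}, one arrives at $\|\Delta\tilde{z}^\lambda\|^2\le K\|(\tilde{z}^\lambda,\nabla\tilde{z}^\lambda,\nabla\tilde{z}^\lambda_t,\tilde{v}^\lambda,\nabla\tilde{v}^\lambda)\|^2+K\|\text{div}\tilde{E}^\lambda\|^2+K\lambda^4\|\nabla\text{div}\tilde{E}^\lambda\|^2+K|||\tilde{\mathbf{w}}^\lambda|||^4+K\lambda^4$; here $\|\tilde{z}^\lambda_t\|$ is traded for $\|\nabla\tilde{z}^\lambda_t\|$ by the Poincaré inequality, legitimate since $\tilde{z}^\lambda$ and hence $\tilde{z}^\lambda_t$ have zero spatial mean by \eqref{ea123} and \eqref{ea26}. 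For $\Delta\tilde{v}^\lambda$ I would rewrite \eqref{ea24} as the stationary Stokes system $-\mu\Delta\tilde{v}^\lambda+\nabla\tilde{\pi}^\lambda=\tilde{F}^\lambda$, $\text{div}\tilde{v}^\lambda=0$, where $\tilde{F}^\lambda$ gathers $-\tilde{v}^\lambda_t$, the trilinear terms in $\tilde{v}^\lambda$ and $v$, and $-D\tilde{E}^\lambda+\lambda^2(\cdots)$; since $\int\tilde{F}^\lambda=\int(-\mu\Delta\tilde{v}^\lambda+\nabla\tilde{\pi}^\lambda)=0$, the $L^2$ regularity theory for the Stokes operator on $\mathbb{T}^3$ gives $\|\Delta\tilde{v}^\lambda\|+\|\nabla\tilde{\pi}^\lambda\|\le K\|\tilde{F}^\lambda\|$, and bounding $\|\tilde{F}^\lambda\|$ term by term --- splitting $\tilde{v}^\lambda_t$ into its mean-zero part (Poincaré) and its mean, which by \eqref{ea24} equals $-\int D\tilde{E}^\lambda+\lambda^2(\cdots)$ and is controlled by $\|\tilde{E}^\lambda\|$ --- yields $\|\Delta\tilde{v}^\lambda\|^2\le K\|(\tilde{v}^\lambda,\nabla\tilde{v}^\lambda,\nabla\tilde{v}^\lambda_t,\tilde{E}^\lambda)\|^2+K\lambda^4\|\text{div}\tilde{E}^\lambda\|^2+K|||\tilde{\mathbf{w}}^\lambda|||^4+K\lambda^4$.

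The core of the argument is the estimate for $w:=\text{div}\tilde{E}^\lambda$. I would read \eqref{ea23} as the degenerate elliptic equation
\[
-\lambda^2\Delta w+Zw=-\lambda^2 w_t-\nabla Z\cdot\tilde{E}^\lambda-\lambda^2(\partial_t\text{div}\mathcal{E}-\Delta\text{div}\mathcal{E})-\text{div}(\tilde{z}^\lambda\mathcal{E}+\tilde{z}^\lambda\tilde{E}^\lambda)+\text{div}(D\tilde{v}^\lambda)-\lambda^2\text{div}\big(\tilde{v}^\lambda\text{div}\mathcal{E}+v\text{div}\mathcal{E}+\tilde{v}^\lambda w+vw\big),
\]
test it against $w$, and integrate by parts only in the $-\lambda^2\Delta w$ term. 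The decisive gain is the coercivity $\int Zw^2\ge\kappa_0\|w\|^2$ supplied by the no-vacuum hypothesis $Z=n+p\ge\kappa_0>0$, together with $\lambda^2\|\nabla w\|^2$ on the left. On the right I would expand every divergence, so that the only terms in which a derivative falls on $w$ are $\lambda^2\int(\tilde{v}^\lambda\cdot\nabla w)w$ and $\lambda^2\int(v\cdot\nabla w)w$, both of which vanish because $\text{div}\tilde{v}^\lambda=\text{div}v=0$; the remaining terms are controlled by Cauchy--Schwarz, the regularity of $D,\mathcal{E},Z,v$, $H^2\hookrightarrow L^\infty$ and Lemma \ref{L1}, with the borderline contribution $\int\tilde{z}^\lambda w^2\le\|\tilde{z}^\lambda\|_{L^\infty}\|w\|^2$ absorbed into $\kappa_0\|w\|^2$ (using the smallness of $|||\tilde{\mathbf{w}}^\lambda|||$ along the continuity argument). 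This produces $\|\text{div}\tilde{E}^\lambda\|^2+\lambda^2\|\nabla\text{div}\tilde{E}^\lambda\|^2\le K\|(\tilde{z}^\lambda,\nabla\tilde{z}^\lambda,\tilde{v}^\lambda,\tilde{E}^\lambda)\|^2+K\lambda^2\|\text{div}\tilde{E}^\lambda_t\|^2+K|||\tilde{\mathbf{w}}^\lambda|||^4+K\lambda^4$, with no $\Delta\tilde{z}^\lambda$, no $\Delta\tilde{v}^\lambda$, and no $\|\text{div}\tilde{E}^\lambda\|^2$ on the right. Adding $\eta$ times the first two estimates to this one, with $\eta>0$ and $\lambda$ small enough that every top-order cross-term --- each of which carries a factor $\eta$ or a factor $\lambda^2$ --- is dominated by the corresponding term on the left, absorbs all the couplings and gives \eqref{eb36}.

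The main obstacle is precisely this third estimate: the factor $\lambda^2$ in front of $\Delta w$ makes the equation degenerate as $\lambda\to 0$, so one has to lean on the $Z$-coercivity rather than on genuine elliptic regularity, and one must scrupulously avoid any integration by parts that would transfer a gradient onto $\nabla\text{div}\tilde{E}^\lambda$ without a compensating power of $\lambda$, which would produce a fatal $\lambda^{-2}$; the $\lambda$-weights built into $|||\cdot|||$ are exactly what keeps the nonlinear divergence terms (e.g. $\lambda^2\text{div}(\tilde{E}^\lambda\text{div}\tilde{E}^\lambda)$) under control. The rest --- the term-by-term bookkeeping and the verification of the zero-mean properties used in the Poincaré steps --- is routine and parallels the proofs of Lemmas \ref{L2}--\ref{L4}.
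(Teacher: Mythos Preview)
Your proof is correct and, at its core, coincides with the paper's: the key third step --- testing \eqref{ea23} against $w=\mathrm{div}\,\tilde{E}^\lambda$ and exploiting $Z\ge\kappa_0$ for coercivity, with the $\lambda^2$-transport pieces vanishing by incompressibility --- is exactly \eqref{eb42}--\eqref{eb45}, and the weighted combination at the end is \eqref{eb50}.

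The only difference is presentational, in how you extract $\|\Delta\tilde{z}^\lambda\|$ and $\|\Delta\tilde{v}^\lambda\|$. You read \eqref{ea22} as an algebraic identity for $\Delta\tilde{z}^\lambda$ and bound $\|\tilde{z}^\lambda_t\|$ via Poincar\'e, and you invoke Stokes regularity on \eqref{ea24}. The paper instead multiplies \eqref{ea22} by $-\Delta\tilde{z}^\lambda$ and \eqref{ea24} by $-\Delta\tilde{v}^\lambda$, keeps the resulting $\tfrac{d}{dt}\|\nabla\tilde{z}^\lambda\|^2$, $\lambda^2\tfrac{d}{dt}\|\mathrm{div}\,\tilde{E}^\lambda\|^2$, $\tfrac{d}{dt}\|\nabla\tilde{v}^\lambda\|^2$ terms through to \eqref{eb50}, and only then moves them to the right and bounds them by Cauchy--Schwarz against $\|\nabla\tilde{z}^\lambda_t\|$, $\lambda^2\|\mathrm{div}\,\tilde{E}^\lambda_t\|$, $\|\nabla\tilde{v}^\lambda_t\|$ (the same device as in Lemma~\ref{L4}). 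The paper's route avoids your mean-zero bookkeeping and the Stokes lemma (the pressure term drops out for free against $-\Delta\tilde{v}^\lambda$ since $\mathrm{div}\,\tilde{v}^\lambda=0$); your route makes the pointwise-in-time nature of \eqref{eb36} more explicit. Also, where you absorb $\int\tilde{z}^\lambda w^2$ into $\kappa_0\|w\|^2$ via smallness of $|||\tilde{\mathbf{w}}^\lambda|||$, the paper simply bounds it by $K_\epsilon\|\tilde{z}^\lambda\|_{H^2}^2\|\mathrm{div}\,\tilde{E}^\lambda\|^2\le K_\epsilon|||\tilde{\mathbf{w}}^\lambda|||^4$, so no a~priori smallness is needed at this step. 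None of these differences is substantive.
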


\begin{proof}
 Multiplying \eqref{ea22} by $-\Delta \tilde{z}^\lambda$, integrating
  the resulting equation over $\mathbb{T}^3$ with respect to $x$, we get
\begin{align}\label{eb37}
&\frac{1}{2}\frac{d}{dt}||\nabla\tilde{z}^\lambda||^2 +||\Delta \tilde{z}^\lambda||^2\nonumber\\
&\ \ =-\int {\rm div}(D\tilde{E}^\lambda) \Delta \tilde{z}^\lambda dx+ \lambda^2\int{\rm div} (\tilde{E}^\lambda{\rm
  div}\mathcal{E}+\mathcal{E}{\rm
  div}\tilde{E}^\lambda)\Delta \tilde{z}^\lambda dx\nonumber\\
& \quad \ \ \ +
\lambda^2\int {\rm div}(\mathcal{E}{\rm div} \mathcal{E})\Delta \tilde{z}^\lambda dx
+\int {\rm div}(Z \tilde{v}^\lambda)\Delta \tilde{z}^\lambda dx
  +\int {\rm div}(v \tilde{z}^\lambda)\Delta \tilde{z}^\lambda dx\nonumber\\
 &\quad \ \  \ +\int {\rm div}(\tilde{z}^\lambda \tilde{v}^\lambda)\Delta \tilde{z}^\lambda dx
  +\lambda^2\int {\rm div}(\tilde{E}^\lambda {\rm div}\tilde{E}^\lambda)\Delta \tilde{z}^\lambda dx.
\end{align}
We estimate the terms on the right-hand side of \eqref{eb37}. By  Cauchy-Schwartz's inequality and
using the regularity of $D,\mathcal{E}, Z $ and $v$,  the first two terms can be bounded by
\begin{equation}\label{eb38}
\epsilon ||\Delta \tilde{z}^\lambda||^2+K_\epsilon||(\tilde{E}^\lambda,{\rm div}\tilde{E}^\lambda)||^2
+K_\epsilon\lambda^4||(\tilde{E}^\lambda,{\rm div}\tilde{E}^\lambda,\nabla{\rm div}\tilde{E}^\lambda)||^2
\end{equation}
and the third, fourth and fifth terms can be bounded by
\begin{equation}\label{eb39}
\epsilon ||\Delta \tilde{z}^\lambda||^2+K_\epsilon||(\nabla \tilde{z}^\lambda,\tilde{v}^\lambda)
 ||^2+K_\epsilon \lambda^4,
\end{equation}
where we use the facts that $\text{div}\tilde{v}^\lambda=0$ and $\text{div} {v} =0$.
For the last two  nonlinear terms, using the facts that
$$
 {\rm div} (\tilde{z}^\lambda\tilde{v}^\lambda)=\nabla
\tilde{z}^\lambda\tilde{v}^\lambda,\quad {\rm div}(\tilde{E}{\rm
div}\tilde{E}^\lambda)=\tilde{E}^\lambda \nabla {\rm
div}\tilde{E}^\lambda+({\rm div}\tilde{E}^\lambda)^2,
$$
  Cauchy-Schwartz's inequality,   Sobolev's embedding
$H^2(\mathbb{T}^3)\hookrightarrow L^\infty(\mathbb{T}^3)$ and the
inequality \eqref{eb2}, we have
\begin{align}\label{eb40}
&\int {\rm div}(\tilde{z}^\lambda \tilde{v}^\lambda)\Delta \tilde{z}^\lambda dx
  +\lambda^2\int {\rm div}(\tilde{E}^\lambda {\rm div}\tilde{E}^\lambda)\Delta \tilde{z}^\lambda dx\nonumber\\
  &\quad \leq \epsilon ||\Delta \tilde{z}^\lambda||^2+K_\epsilon ||\nabla \tilde{z}^\lambda \tilde{v}^\lambda||^2
 + K_\epsilon \lambda^4 ||{\rm div}( \tilde{E}^\lambda {\rm div}\tilde{E}^\lambda)||^2\nonumber\\
  & \quad \leq    \epsilon ||\Delta \tilde{z}^\lambda||^2+K_\epsilon ||\nabla \tilde{z}^\lambda||^2|| \tilde{v}^\lambda||^2_{L^\infty}\nonumber\\
 & \quad \ \ \   +K_\epsilon \lambda^4 (|| \tilde{E}^\lambda||^2_{L^\infty}||\nabla {\rm div}\tilde{E}^\lambda||^2+
||{\rm div}  \tilde{E}^\lambda ||^2_{H^1}||{\rm div}\tilde{E}^\lambda||^2_{H^1})\nonumber\\
  & \quad \leq    \epsilon ||\Delta \tilde{z}^\lambda||^2+K_\epsilon ||\nabla \tilde{z}^\lambda||^2|| \tilde{v}^\lambda||^2_{H^2}\nonumber\\
 & \quad \ \ \   +K_\epsilon \lambda^4 (|| \tilde{E}^\lambda||^2_{H^2}||\nabla {\rm div}\tilde{E}^\lambda||^2+
||{\rm div}  \tilde{E}^\lambda ||^2_{H^1}||{\rm div}\tilde{E}^\lambda||^2_{H^1})\nonumber\\
 & \quad \leq \epsilon ||\Delta \tilde{z}^\lambda||^2+K_\epsilon |||\tilde{\mathbf{w}}^\lambda|||^4.
\end{align}
Putting \eqref{eb37}-\eqref{eb40} together and choosing $\epsilon$ small enough, we have
\begin{align}\label{eb41}
&\frac{d}{dt}||\nabla\tilde{z}^\lambda||^2 +c_7||\Delta \tilde{z}^\lambda||^2\nonumber\\
&\quad \leq K||(\tilde{E}^\lambda, {\rm div}\tilde{E}^\lambda)||^2
+K\lambda^4||(\tilde{E}^\lambda,{\rm div}\tilde{E}^\lambda,\nabla {\rm div}\tilde{E}^\lambda)||^2\nonumber\\
& \quad  \ \ \  +K ||(\nabla \tilde{z}^\lambda, \tilde{v}^\lambda)||^2+K |||\tilde{\mathbf{w}}^\lambda|||^4+K \lambda^4.
\end{align}

Multiplying \eqref{ea23} by ${\rm div}\tilde{E}^\lambda$ and integrating
  the resulting equation over $\mathbb{T}^3$ with respect to $x$, we get
\begin{align}\label{eb42}
& \frac{\lambda^2}{2}\frac{d}{dt}||{\rm div}\tilde{E}^\lambda||^2+\lambda^2||\nabla {\rm div}\tilde{E}^\lambda||^2
+\int Z |{\rm div}\tilde{E}^\lambda|^2 dx\nonumber\\
&   =  - \int \nabla Z
\tilde{E}^\lambda {\rm div}\tilde{E}^\lambda  dx -\lambda^2\int {\rm div}(\partial_t \mathcal{E}-\Delta
\mathcal{E}){\rm div}\tilde{E}^\lambda  dx-\int {\rm div}(\mathcal{E}\tilde{z}^\lambda)
{\rm div}\tilde{E}^\lambda dx \nonumber\\
&  \ \ \ -\lambda^2 \int  \tilde{v}^\lambda \nabla{\rm div}\mathcal{E}  {\rm div}\tilde{E}^\lambda
dx -\lambda^2 \int v  \nabla{\rm div}\mathcal{E}   {\rm div}\tilde{E}^\lambda dx
+\int \tilde{v}^\lambda \nabla D {\rm div}\tilde{E}^\lambda dx
\nonumber\\
&  \ \ \
-\lambda^2 \int v \nabla {\rm div}\tilde{E}^\lambda   {\rm div}\tilde{E}^\lambda dx
-\lambda^2 \int \tilde{v}^\lambda \nabla {\rm div }\tilde{E}^\lambda {\rm div}\tilde{E}^\lambda dx
-\int {\rm div}(\tilde{z}^\lambda   \tilde{E}^\lambda){\rm div}\tilde{E}^\lambda dx,
\end{align}
where we have used $\text{div}\tilde{v}^\lambda =0$ and $\text{div}{v}  =0$.
 By  Cauchy-Schwartz's inequality and
using the regularity of $\mathcal{E}, v,D $ and $Z$,  the first seven terms on the right hand side of \eqref{eb42}
can be bounded by
\begin{equation}\label{eb43}
\epsilon||{\rm div}\tilde{E}^\lambda||^2
+K_\epsilon ||(\tilde{E}^\lambda,\tilde{z}^\lambda, \nabla \tilde{z}^\lambda,\tilde{v}^\lambda)||^2
+K_\epsilon \lambda^4||(\tilde{v}^\lambda,\nabla {\rm div}\tilde{E}^\lambda)||^2 +K_\epsilon \lambda^4.
\end{equation}
By  Cauchy-Schwartz's inequality, Sobolev's embedding $H^2(\mathbb{T}^3)\hookrightarrow
L^\infty(\mathbb{T}^3)$, and
using the inequality \eqref{eb2}, the last two nonlinear terms on the right hand side of \eqref{eb42}
can be treated as follows
\begin{align}\label{eb44}
&   -\lambda^2 \int \tilde{v}^\lambda \nabla {\rm div }\tilde{E}^\lambda {\rm div}\tilde{E}^\lambda dx
-\int {\rm div}(\tilde{z}^\lambda   \tilde{E}^\lambda){\rm div}\tilde{E}^\lambda dx \nonumber\\
\leq & \epsilon||{\rm div}\tilde{E}^\lambda||^2 +K_\epsilon \lambda^4 ||\tilde{v}^\lambda \nabla {\rm div }\tilde{E}^\lambda||^2
+K_\epsilon||{\rm div}(\tilde{z}^\lambda   \tilde{E}^\lambda)||^2\nonumber\\
\leq & \epsilon||{\rm div}\tilde{E}^\lambda||^2
+K_\epsilon \lambda^4  ||\tilde{v}^\lambda||^2_{L^\infty}|| \nabla {\rm div }\tilde{E}^\lambda||^2 \nonumber\\
  & +K_\epsilon(|| \tilde{z}^\lambda ||^2_{L^\infty}||{\rm div } \tilde{E}^\lambda||^2
+||{\nabla}\tilde{z}^\lambda||^2_{H^1}|| \tilde{E}^\lambda||^2_{H^1})\nonumber\\
\leq & \epsilon||{\rm div}\tilde{E}^\lambda||^2
+K_\epsilon \lambda^4  ||\tilde{v}^\lambda||^2_{H^2}|| \nabla {\rm div }\tilde{E}^\lambda||^2 \nonumber\\
  & +K_\epsilon(|| \tilde{z}^\lambda ||^2_{H^2}||{\rm div } \tilde{E}^\lambda||^2
+||{\nabla}\tilde{z}^\lambda||^2_{H^1}|| \tilde{E}^\lambda||^2_{H^1})\nonumber\\
\leq & \epsilon||{\rm div}\tilde{E}^\lambda||^2
 +K_\epsilon (1+\lambda^2)|||\tilde{\mathbf{w}}^\lambda|||^4.
  \end{align}
Putting \eqref{eb42}-\eqref{eb44} together and using the positivity of ${Z}$, we get
\begin{align}\label{eb45}
&\lambda^2 \frac{d}{dt}||{\rm div}\tilde{E}^\lambda||^2
+2\lambda^2 ||\nabla {\rm div}\tilde{E}^\lambda||^2+c_{8}||{\rm div}\tilde{E}^\lambda||^2\nonumber\\
&\quad \leq K||(\tilde{z}^\lambda, \nabla \tilde{z}^\lambda,\tilde{E}^\lambda,\tilde{v}^\lambda)||^2
+K  \lambda^4||(\tilde{v}^\lambda,\nabla {\rm div}\tilde{E}^\lambda)||^2\nonumber\\
 &\quad \ \ \ +K (1+\lambda^2)|||\tilde{\mathbf{w}}^\lambda|||^4+K  \lambda^4.
\end{align}

Multiplying \eqref{ea24} by $-\Delta\tilde{v}^\lambda$ and integrating
  the resulting equation over $\mathbb{T}^3$ with respect to $x$, by \eqref{ea25} and integrating it by parts, we have
  \begin{align}\label{eb46}
& \frac{1}{2}\frac{d}{dt}||\nabla \tilde{v}^\lambda||^2 +\mu ||\Delta \tilde{v}^\lambda||^2\nonumber\\
& \quad = \int (v \cdot \nabla \tilde{v}^\lambda) \Delta\tilde{v}^\lambda dx
+\int (\tilde{v}^\lambda\cdot \nabla  {v})  \Delta\tilde{v}^\lambda dx
- \lambda^2\int \tilde{E}^\lambda\text{div}\mathcal{E}  \Delta\tilde{v}^\lambda dx\nonumber\\
& \quad \ \ \
 -\lambda^2\int \mathcal{E}\text{div}\tilde{E}^\lambda  \Delta\tilde{v}^\lambda dx
-\lambda^2\int \mathcal{E}\text{div}\mathcal{E} \Delta\tilde{v}^\lambda dx
+\int D\tilde{E}^\lambda  \Delta\tilde{v}^\lambda dx \nonumber\\
&\quad \ \ \ -\lambda^2\int \tilde{E}^\lambda\text{div}\tilde{E}^\lambda  \Delta\tilde{v}^\lambda dx
+\int(\tilde{v}^\lambda\cdot \nabla  \tilde{v}^\lambda ) \Delta\tilde{v}^\lambda dx
 \end{align}
We estimate the terms on the right-hand side of \eqref{eb46}. By   Cauchy-Schwartz's inequality and
using the regularity of $v,\mathcal{E} $ and $D$,  the first six terms
  can be bounded by
\begin{equation}\label{eb47}
\epsilon||\Delta \tilde{v}^\lambda||^2
+K_\epsilon ||(\tilde{v}^\lambda,\nabla \tilde{v}^\lambda,\tilde{E}^\lambda)||^2
+K_\epsilon \lambda^4||(\tilde{E}^\lambda,  {\rm div}\tilde{E}^\lambda)||^2 +K_\epsilon \lambda^4.
\end{equation}
By  Cauchy-Schwartz's inequality, Sobolev's embedding
$H^2(\mathbb{T}^3)\hookrightarrow L^\infty(\mathbb{T}^3)$, and using
the inequality \eqref{eb2}, the last two nonlinear terms   can be
treated as follows
\begin{align}\label{eb48}
&\qquad-\lambda^2\int \tilde{E}^\lambda\text{div}\tilde{E}^\lambda  \Delta\tilde{v}^\lambda dx
+\int(\tilde{v}^\lambda\cdot \nabla  \tilde{v}^\lambda)  \Delta\tilde{v}^\lambda dx \nonumber\\
& \quad \leq \epsilon||\Delta \tilde{v}^\lambda||^2 +K_\epsilon ||\tilde{v}^\lambda\cdot \nabla  \tilde{v}^\lambda ||^2
+K_\epsilon\lambda^4 ||\tilde{E}^\lambda\text{div}\tilde{E}^\lambda||^2\nonumber\\
& \quad \leq \epsilon||\Delta \tilde{v}^\lambda||^2
+K_\epsilon ||\tilde{v}^\lambda||^2_{L^\infty}||\nabla  \tilde{v}^\lambda ||^2
+K_\epsilon\lambda^4 ||\tilde{E}^\lambda||^2_{L^\infty}||\text{div}\tilde{E}^\lambda||^2\nonumber\\
& \quad \leq \epsilon||\Delta \tilde{v}^\lambda||^2
+K_\epsilon ||\tilde{v}^\lambda||^2_{H^2}||\nabla  \tilde{v}^\lambda ||^2
+K_\epsilon\lambda^4 ||\tilde{E}^\lambda||^2_{H^2}||\text{div}\tilde{E}^\lambda||^2\nonumber\\
&\quad  \leq \epsilon||\Delta \tilde{v}^\lambda||^2+ K_\epsilon |||\tilde{\mathbf{w}}^\lambda|||^4.
 \end{align}
Putting \eqref{eb46}-\eqref{eb48} together and choosing $\epsilon$ small enough, we have
\begin{align}\label{eb49}
\frac{d}{dt}||\nabla \tilde{v}^\lambda||^2 +c_{10}\mu ||\Delta
\tilde{v}^\lambda||^2
&\leq K||(\tilde{v}^\lambda,\nabla \tilde{v}^\lambda, \tilde{E}^\lambda)||^2\nonumber\\
&\  \
+K\lambda^4||(\tilde{E}^\lambda, {\rm div}\tilde{E}^\lambda)||^2
+K |||\tilde{\mathbf{w}}^\lambda|||^4+K\lambda^4.
\end{align}

Combining \eqref{eb41}, \eqref{eb45} and \eqref{eb49},   and restricting $\lambda$ is small, we get
\begin{align}\label{eb50}
&\frac{d}{dt}\Big(\delta_5 ||\nabla \tilde{z}^\lambda||^2+\lambda^2||{\rm div}\tilde{E}^\lambda||^2+\delta_6||\nabla \tilde{v}^\lambda||^2\Big)\nonumber\\
&\quad \  \ + \delta_5c_8||\Delta \tilde{z}^\lambda||^2+
(2\lambda^2-K\lambda^4\delta_5-K\lambda^4)||\nabla {\rm div} \tilde{E}^\lambda||^2+\delta_6\mu c_{9}||\Delta \tilde{v}^\lambda||^2\nonumber\\
&\quad \  \ +(c_{8}-K\delta_5(\lambda^4+1)-K\delta_6\lambda^4)||{\rm div} \tilde{E}^\lambda||^2\nonumber\\
& \quad \leq K_3 ||( \tilde{E}^\lambda,\tilde{z}^\lambda  ,\nabla \tilde{z}^\lambda,\tilde{v}^\lambda,\nabla \tilde{v}^\lambda)||^2
+K_3 |||\tilde{\mathbf{w}}^\lambda|||^4+K_3\lambda^4
\end{align}
for some $\delta_5$ and $\delta_6$ sufficient small, which gives the   inequality \eqref{eb36}.
\end{proof}

In order to close the estimates on the right-hand side of \eqref{eb36}, we need to obtain the uniform bounds of the
time derivatives $(  \nabla \tilde{z}^\lambda_t,
  \nabla \tilde{v}^\lambda_t,\lambda  \text{div}\tilde{E}^\lambda_t)$, which
is given by the next lemma.
\begin{lem}\label{L6}
Under the assumptions of Theorem \ref{th}, we have
\begin{align}\label{eb51}
&\quad ||\nabla \tilde{z}^\lambda_t||^2+\lambda^2||{\rm div}\tilde{E}^\lambda_t||^2
+ ||\nabla \tilde{v}^\lambda_t||^2 \nonumber\\
&\quad +\int^t_0 \big(||\Delta \tilde{z}^\lambda_t||^2+||\Delta \tilde{v}^\lambda_t||^2
      +||{\rm div}\tilde{E}^\lambda_t||^2+\lambda^2 ||\nabla {\rm div}\tilde{E}^\lambda_t||^2
\big)(s)dx\nonumber\\
&  \leq K\big(||\nabla \tilde{z}^\lambda_t||^2+\lambda^2||{\rm div}\tilde{E}^\lambda_t||^2
+ ||\nabla \tilde{v}^\lambda_t||^2\big)(t=0)\nonumber\\
&\quad +K \int^t_0 \big(||(\tilde{z}^\lambda, \tilde{z}^\lambda_t, \nabla \tilde{z}^\lambda,\nabla \tilde{z}^\lambda_t)||^2+
  ||(\tilde{v}^\lambda, \tilde{v}^\lambda_t, \nabla \tilde{v}^\lambda,\nabla \tilde{v}^\lambda_t)||^2)(s)ds\nonumber\\
&\quad     +K  \int^t_0 \big( ||(\tilde{E}^\lambda, \tilde{E}^\lambda_t, {\rm div} \tilde{E}^\lambda,\nabla {\rm div}\tilde{E}^\lambda)||^2)(s)ds
+K \int^t_0   |||\tilde{\mathbf{w}}^\lambda|||^4(s)ds\nonumber\\
&\quad
+K  \int^t_0 \Big\{|||\tilde{\mathbf{w}}^\lambda|||^2
\big(
|| \tilde{z}^\lambda_t||^2_{H^2}+||\nabla\tilde{z}^\lambda_t||^2_{H^1}+ || \tilde{E}^\lambda_t||^2_{H^1}
+||{\rm div}\tilde{E}^\lambda_t||^2\big)\Big\}(s)ds\nonumber\\
&\quad    +K \lambda^2\int^t_0 \Big\{ |||\tilde{\mathbf{w}}^\lambda|||^2
\big(
|| \tilde{E}^\lambda_t||^2_{H^2}+||{\rm div}\tilde{E}^\lambda_t||^2_{H^1}+
||\nabla{\rm div}\tilde{E}^\lambda_t||^2
 \big)\Big\}(s)ds+K \lambda^4,
\end{align}
\end{lem}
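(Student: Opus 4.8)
The plan is to follow the pattern already used for Lemma~\ref{L3} and Lemma~\ref{L5}, but now combining one time differentiation with one additional spatial derivative. I would differentiate the error equations \eqref{ea22}, \eqref{ea23} and \eqref{ea24} with respect to $t$ and test the resulting equations with $-\Delta\tilde z^\lambda_t$, with $\mathrm{div}\,\tilde E^\lambda_t$ (equivalently $-\Delta\tilde\Phi^\lambda_t$), and with $-\Delta\tilde v^\lambda_t$ respectively, integrating over $\mathbb T^3$. After integration by parts the principal parts produce the coercive combinations $\tfrac12\tfrac{d}{dt}\|\nabla\tilde z^\lambda_t\|^2+\|\Delta\tilde z^\lambda_t\|^2$, then $\tfrac{\lambda^2}{2}\tfrac{d}{dt}\|\mathrm{div}\,\tilde E^\lambda_t\|^2+\lambda^2\|\nabla\mathrm{div}\,\tilde E^\lambda_t\|^2+\int Z|\mathrm{div}\,\tilde E^\lambda_t|^2\,dx$, and $\tfrac12\tfrac{d}{dt}\|\nabla\tilde v^\lambda_t\|^2+\mu\|\Delta\tilde v^\lambda_t\|^2$ (the pressure term drops out since $\mathrm{div}\,\tilde v^\lambda=0$ forces $\mathrm{div}\,\tilde v^\lambda_t=0$, whence $\int\nabla\tilde\pi^\lambda_t\cdot\Delta\tilde v^\lambda_t\,dx=0$). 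The hypothesis $Z=n+p\ge\kappa_0>0$ then gives the extra dissipation $\kappa_0\|\mathrm{div}\,\tilde E^\lambda_t\|^2$, which is what produces the $\|\mathrm{div}\,\tilde E^\lambda_t\|^2$-term on the left of \eqref{eb51}.

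On the right-hand sides I would split the terms into two classes. The linear terms, namely those carrying at least one factor among $D,\mathcal E,v,Z$ and their time derivatives (all bounded in every Sobolev norm by the regularity of the limit solution), are controlled by the Cauchy--Schwarz inequality, absorbing $\epsilon$-multiples of $\|\Delta\tilde z^\lambda_t\|^2$, $\|\Delta\tilde v^\lambda_t\|^2$, $\|\mathrm{div}\,\tilde E^\lambda_t\|^2$ and $\lambda^2\|\nabla\mathrm{div}\,\tilde E^\lambda_t\|^2$; this produces exactly the admissible low-order bulk terms appearing on the right of \eqref{eb51}, together with their $\lambda^4$-weighted analogues and the constant $K\lambda^4$.

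The crux is the genuinely quadratic terms, i.e. products of two error quantities with their space or time derivatives (for instance $\partial_t(\tilde z^\lambda\tilde v^\lambda)$, $\partial_t(\tilde z^\lambda\mathcal E)$- and $\partial_t(\tilde z^\lambda\tilde E^\lambda)$-type, $\partial_t(\tilde E^\lambda\,\mathrm{div}\,\tilde E^\lambda)$, $\partial_t(\tilde v^\lambda\!\cdot\!\nabla\tilde v^\lambda)$, and their divergences). For these I would apply Cauchy--Schwarz against the coercive terms and then estimate the $L^2$-norm of the remaining product of derivatives by Hölder, Lemma~\ref{L1}, and the embedding $H^2(\mathbb T^3)\hookrightarrow L^\infty(\mathbb T^3)$. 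When both factors and their derivatives can be absorbed into $|||\tilde{\mathbf w}^\lambda|||$ one simply gets $K_\epsilon|||\tilde{\mathbf w}^\lambda|||^4$. But after expanding divergences, terms such as $\mathrm{div}\,\tilde E^\lambda_t\,\mathrm{div}\,\tilde E^\lambda$ and $\tilde E^\lambda_t\!\cdot\!\nabla\mathrm{div}\,\tilde E^\lambda$ (hit with $\Delta\tilde z^\lambda_t$) or $\tilde v^\lambda\!\cdot\!\nabla\tilde v^\lambda_t$ (hit with $\Delta\tilde v^\lambda_t$) force a time derivative at order $H^2$, which is \emph{not} bounded by $|||\tilde{\mathbf w}^\lambda|||$ since that functional controls the time derivatives only in $H^1$ (and $\tilde E^\lambda_t$ only at the scale $\lambda^{-1}$). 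Such a term must be kept in the form $|||\tilde{\mathbf w}^\lambda|||^2$ times a higher-order norm of a time derivative, using $\|\tilde E^\lambda\|_{H^2}=\lambda^{-1}\|\lambda\tilde E^\lambda\|_{H^2}\le\lambda^{-1}|||\tilde{\mathbf w}^\lambda|||$ to rebalance the powers of $\lambda$; this is precisely how the terms $|||\tilde{\mathbf w}^\lambda|||^2(\|\tilde z^\lambda_t\|^2_{H^2}+\|\nabla\tilde z^\lambda_t\|^2_{H^1}+\|\tilde E^\lambda_t\|^2_{H^1}+\|\mathrm{div}\,\tilde E^\lambda_t\|^2)$ and $\lambda^2|||\tilde{\mathbf w}^\lambda|||^2(\|\tilde E^\lambda_t\|^2_{H^2}+\|\mathrm{div}\,\tilde E^\lambda_t\|^2_{H^1}+\|\nabla\mathrm{div}\,\tilde E^\lambda_t\|^2)$ retained on the right of \eqref{eb51} arise; their eventual control is deferred to the closure argument of Section~\ref{S3}. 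The trilinear transport terms involving $\tilde v^\lambda,\tilde v^\lambda_t,v,v_t$ are routine, treated as in the proof of Lemma~\ref{L5} by Cauchy--Schwarz, $H^2\hookrightarrow L^\infty$ and \eqref{eb2}, using $\mathrm{div}\,v=0$ to eliminate the worst transport contribution.

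Finally I would multiply the $\tilde z^\lambda_t$- and $\tilde v^\lambda_t$-inequalities by sufficiently small constants $\delta_7,\delta_8$, add them to the $\tilde E^\lambda_t$-inequality, restrict $\lambda\ll1$ so that all the coercive coefficients stay positive, and integrate in $t$ on $[0,t]$; this yields \eqref{eb51}. I expect the main obstacle to be the $\lambda$-weight bookkeeping in the quadratic terms of the Poisson and $\tilde z^\lambda$ equations: one must check that every copy of $\|\nabla\mathrm{div}\,\tilde E^\lambda_t\|^2$ generated on the right carries a factor $\lambda^4$ (so that it is absorbed by $\epsilon\lambda^2\|\nabla\mathrm{div}\,\tilde E^\lambda_t\|^2$ for $\lambda$ small), and that none of the unabsorbable cross terms falls outside the list permitted by \eqref{eb51}.
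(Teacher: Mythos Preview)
Your proposal is correct and follows essentially the same approach as the paper: differentiate \eqref{ea22}--\eqref{ea24} in $t$, test with $-\Delta\tilde z^\lambda_t$, $\mathrm{div}\,\tilde E^\lambda_t$, $-\Delta\tilde v^\lambda_t$ respectively, split the right-hand sides into linear terms (bounded via Cauchy--Schwarz and the regularity of $D,\mathcal E,v,Z$) and quadratic terms (handled by \eqref{eb2} and $H^2\hookrightarrow L^\infty$, with the unabsorbable cross terms retained exactly as in \eqref{eb51}), then combine with small weights $\delta_7,\delta_8$ and integrate. The only minor over-caution is your example $\tilde v^\lambda\!\cdot\!\nabla\tilde v^\lambda_t$ tested against $\Delta\tilde v^\lambda_t$: in fact $\|\tilde v^\lambda\|_{L^\infty}\|\nabla\tilde v^\lambda_t\|\le K|||\tilde{\mathbf w}^\lambda|||^2$, so the velocity equation contributes only $|||\tilde{\mathbf w}^\lambda|||^4$ (as in the paper's \eqref{eb62}--\eqref{eb63}), and the genuinely unabsorbable products all come from the $\tilde z^\lambda$- and $\tilde E^\lambda$-equations.
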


\begin{proof}
Differentiating \eqref{ea22} with respect to $t$, multiplying the resulting equation by $-\Delta \tilde{z}^\lambda_t$ and
  integrating it over $\mathbb{T}^3$ with respect to $x$, we get
  \begin{align}\label{eb52}
   & \frac12 \frac{d}{dt}||\nabla \tilde{z}^\lambda_t||^2+||\Delta\tilde{z}^\lambda_t||^2\nonumber\\
&=\int \Big\{-{\rm div}(D\tilde{E}^\lambda_t)+\lambda^2\partial_t[\text{div}(\mathcal{E}\text{div}\tilde{E}^\lambda+\tilde{E}^\lambda
\text{div}\mathcal{E})]+\lambda^2\partial_t[\text{div}(\mathcal{E}\text{div}\mathcal{E})]\nonumber\\
&  \quad +\partial_t\text{div}(\tilde{z}^\lambda v)+\partial_t[\text{div}(Z\tilde{v}^\lambda)]\Big\}\Delta\tilde{z}^\lambda_tdx\nonumber\\
&\quad +\int\Big\{ \partial_t[\text{div}(\tilde{z}^\lambda\tilde{v}^\lambda)]
+\lambda^2\partial_t[\text{div}(\tilde{E}^\lambda\text{div}\tilde{E}^\lambda)]\Big\}\Delta\tilde{z}^\lambda_tdx.
  \end{align}
We estimate the terms on the right-hand side of \eqref{eb52}.
By   Cauchy-Schwartz's inequality and using the regularity of
$D, \mathcal{E}, v  $ and $Z$,  the first integral can be bounded by
\begin{align}\label{eb53}
&\epsilon||\Delta \tilde{z}^\lambda_t||^2+K_\epsilon ||(\tilde{E}^\lambda_t, {\rm div}\tilde{E}^\lambda_t)||^2
+K_\epsilon||(\nabla\tilde{z}^\lambda, \nabla \tilde{z}^\lambda_t, \tilde{v}^\lambda,  \tilde{v}^\lambda_t)||^2\nonumber\\
&  +K_\epsilon \lambda^4||(\tilde{E}^\lambda, \tilde{E}^\lambda_t, {\rm div}\tilde{E}^\lambda, {\rm div}\tilde{E}^\lambda_t,
\nabla{\rm div}\tilde{E}^\lambda, \nabla{\rm div}\tilde{E}^\lambda_t)||^2+K_\epsilon \lambda^4,
\end{align}
where we have used the facts ${\rm div} \tilde{v}^\lambda =0$ and ${\rm div} v=0$.
For the second integral, by   Cauchy-Schwartz's inequality,
Sobolev's embedding $H^2(\mathbb{T}^3)\hookrightarrow
L^\infty(\mathbb{T}^3)$, the inequality \eqref{eb2}, and  ${\rm div}
\tilde{v}^\lambda=0$, we have
\begin{align}\label{eb54}
&\quad \int\Big\{
\partial_t[\text{div}(\tilde{z}^\lambda\tilde{v}^\lambda)]
+\lambda^2\partial_t[\text{div}(\tilde{E}^\lambda\text{div}\tilde{E}^\lambda)]\Big\}\Delta\tilde{z}^\lambda_tdx\nonumber\\
& \leq \epsilon ||\Delta \tilde{z}^\lambda_t||^2
+K_\epsilon||\partial_t[\text{div}(\tilde{z}^\lambda\tilde{v}^\lambda)]||^2
+K_\epsilon \lambda^4||\partial_t[\text{div}(\tilde{E}^\lambda\text{div}\tilde{E}^\lambda)]||^2\nonumber\\
&\leq \epsilon ||\Delta \tilde{z}^\lambda_t||^2 +K_\epsilon(||\nabla
\tilde{z}^\lambda||^2_{H^1}||\tilde{v}^\lambda_t||^2_{H^1}
+||\nabla \tilde{z}^\lambda_t||^2 ||\tilde{v}^\lambda||^2_{L^\infty})\nonumber\\
&  \ \ +K_\epsilon \lambda^4(||
\tilde{E}^\lambda_t||^2_{L^\infty}||\nabla
\text{div}\tilde{E}^\lambda||^2+ 2||
\text{div}\tilde{E}^\lambda||^2_{H^1}||\text{div}\tilde{E}^\lambda_t||^2_{H^1}
+|| \tilde{E}^\lambda||^2_{L^\infty}||\nabla \text{div}\tilde{E}^\lambda_t||^2)\nonumber\\
&\leq \epsilon ||\Delta \tilde{z}^\lambda_t||^2 +K_\epsilon(||\nabla
\tilde{z}^\lambda||^2_{H^1}||\tilde{v}^\lambda_t||^2_{H^1}
+||\nabla \tilde{z}^\lambda_t||^2||\tilde{v}^\lambda||^2_{H^2})\nonumber\\
&  \ \ +K_\epsilon \lambda^4(||
\tilde{E}^\lambda_t||^2_{H^2}||\nabla
\text{div}\tilde{E}^\lambda||^2+ 2||
\text{div}\tilde{E}^\lambda||^2_{H^1}||\text{div}\tilde{E}^\lambda_t||^2_{H^1}
+|| \tilde{E}^\lambda||^2_{H^2}||\nabla \text{div}\tilde{E}^\lambda_t||^2)\nonumber\\
&\leq \epsilon ||\Delta \tilde{z}^\lambda_t||^2
+K_\epsilon|||\tilde{\mathbf{w}}^\lambda |||^4+ K_\epsilon\lambda^2|||\tilde{\mathbf{w}}^\lambda
|||^2\big(||\tilde{E}^\lambda_t||^2_{H^2}+||\text{div}\tilde{E}^\lambda_t||^2_{H^1}+
||\nabla \text{div}\tilde{E}^\lambda_t||^2\big).
  \end{align}
Thus, by putting  \eqref{eb52}-\eqref{eb54} together and taking
$\epsilon$ to be small enough,   we obtain
\begin{align}\label{eb55}
&   \frac{d}{dt}||\nabla \tilde{z}^\lambda_t||^2+c_{10}||\Delta\tilde{z}^\lambda_t||^2\nonumber\\
&\leq K  ||(\tilde{E}^\lambda_t, {\rm div}\tilde{E}^\lambda_t)||^2
+K_\epsilon||(\nabla\tilde{z}^\lambda, \nabla \tilde{z}^\lambda_t, \tilde{v}^\lambda,   \tilde{v}^\lambda_t)||^2\nonumber\\
& \quad +K  \lambda^4||(\tilde{E}^\lambda, \tilde{E}^\lambda_t, {\rm
div}\tilde{E}^\lambda, {\rm div}\tilde{E}^\lambda_t, \nabla{\rm
div}\tilde{E}^\lambda, \nabla{\rm div}\tilde{E}^\lambda_t)||^2\nonumber\\
& \quad +K |||\tilde{\mathbf{w}}^\lambda |||^4+ K \lambda^2|||\tilde{\mathbf{w}}^\lambda
|||^2\big(||\tilde{E}^\lambda_t||^2_{H^2}+||\text{div}\tilde{E}^\lambda_t||^2_{H^1}+
||\nabla \text{div}\tilde{E}^\lambda_t||^2\big)+K \lambda^4.
\end{align}

Differentiating \eqref{ea23} with respect to $t$, multiplying the resulting equation by ${\rm div} \tilde{E}^\lambda_t$ and
  integrating it over $\mathbb{T}^3$ with respect to $x$, we get
  \begin{align}\label{eb56}
&\frac{\lambda^2}{2}\frac{d}{dt}||{\rm div}\tilde{E}^\lambda_t||^2+\lambda^2||\nabla {\rm div}\tilde{E}^\lambda_t||^2
+\int Z |{\rm div}\tilde{E}^\lambda_t|^2 dx\nonumber\\
& \quad = -\int\big( \partial_t(\nabla Z\tilde{E}^\lambda)+Z_t{\rm
div} \tilde{E}^\lambda\big){\rm
div}\tilde{E}^\lambda_tdx-\lambda^2\int \partial_t
[\partial_t\text{div}\mathcal{E}-\Delta \text{div}\mathcal{E}]{\rm
div} \tilde{E}^\lambda_t dx\nonumber\\
&\quad \quad  -\int \partial_t
[\text{div}(\tilde{z}^\lambda\mathcal{E})]{\rm div}
\tilde{E}^\lambda_t dx
 -\lambda^2\int
\partial_t[\text{div}(\tilde{v}^\lambda\text{div}\mathcal{E}+v\text{div}\mathcal{E})]{\rm
div} \tilde{E}^\lambda_t dx\nonumber\\
&\quad\quad  +\int \partial_t [\text{div}(D\tilde{v}^\lambda)]{\rm
div} \tilde{E}^\lambda_t dx
 -\lambda^2\int \partial_t
[\text{div}(v\text{div}\tilde{E}^\lambda)]{\rm div}
\tilde{E}^\lambda_t dx
\nonumber\\
&\quad\quad-\int \partial_t [\text{div}(\tilde{z}^\lambda\tilde{E}^\lambda)]{\rm div} \tilde{E}^\lambda_t dx
 -\lambda^2\int \partial_t [\text{div}(\tilde{v}^\lambda{\rm div}\tilde{E}^\lambda)]{\rm div} \tilde{E}^\lambda_t dx
  \end{align}
We estimate each term on the right-hand side of \eqref{eb56}.
Noticing ${\rm div} \tilde{v}^\lambda =0$ and ${\rm div} v=0$, by   Cauchy-Schwartz's inequality,
using the regularity of $Z, \mathcal{E}, v $ and $D$,  the first six terms can be bounded by
\begin{align}\label{eb57}
& \epsilon||{\rm div}\tilde{E}^\lambda_t||^2 + K_\epsilon
||(\tilde{E}^\lambda, \tilde{E}^\lambda_t, {\rm
div}\tilde{E}^\lambda)||^2 +
K_\epsilon||(\tilde{z}^\lambda,\tilde{z}^\lambda_t, \nabla
\tilde{z}^\lambda, \nabla \tilde{z}^\lambda_t)||^2
\nonumber\\
&  +K_\epsilon ||(\tilde{v}^\lambda,\tilde{v}^\lambda_t)||^2
+K_\epsilon \lambda^4||(\tilde{v}^\lambda,\tilde{v}^\lambda_t)||^2
+K_\epsilon \lambda^4||(\nabla{\rm div}\tilde{E}^\lambda,\nabla{\rm div}\tilde{E}^\lambda_t)||^2+K_\epsilon
\lambda^4.
\end{align}
For the last two nonlinear terms,  by Cauchy-Schwartz's inequality,
  Sobolev's embedding $H^2(\mathbb{T}^3)\hookrightarrow
L^\infty(\mathbb{T}^3)$,  the inequality \eqref{eb2} and $\text{div}\tilde{v}^\lambda =0$,
 they can be estimated as follows
\begin{align}\label{eb58}
& -\int \partial_t [\text{div}(\tilde{z}^\lambda\tilde{E}^\lambda)]{\rm div} \tilde{E}^\lambda_t dx
 -\lambda^2\int \partial_t [\text{div}(\tilde{v}^\lambda{\rm div}\tilde{E}^\lambda)]{\rm div} \tilde{E}^\lambda_t dx\nonumber\\
& \leq \epsilon ||{\rm div} \tilde{E}^\lambda_t||^2
+K_\epsilon||\partial_t[\text{div}(\tilde{z}^\lambda\tilde{E}^\lambda)]||^2
+K_\epsilon \lambda^4||\partial_t[\text{div}(\tilde{v}^\lambda\text{div}\tilde{E}^\lambda)]||^2\nonumber\\
&\leq \epsilon ||{\rm div} \tilde{E}^\lambda_t||^2
 +K_\epsilon \big(|| \tilde{z}^\lambda||^2_{L^\infty}|| \text{div}\tilde{E}^\lambda_t||^2
+||  \tilde{z}^\lambda_t||^2_{L^\infty}|| \text{div}\tilde{E}^\lambda||^2+
|| \nabla \tilde{z}^\lambda||^2_{H^1}|| \tilde{E}^\lambda_t||^2_{H^1}\nonumber\\
&  \ \ +||\nabla \tilde{z}^\lambda_t||^2_{H^1}||\tilde{E}^\lambda||^2_{H^1}\big)
 +K_\epsilon\lambda^4(||\tilde{v}^\lambda_t||^2_{L^\infty}||\nabla {\rm div} \tilde{E}^\lambda||^2+
||\tilde{v}^\lambda||^2_{L^\infty}||\nabla {\rm div} \tilde{E}^\lambda_t||^2)\nonumber\\
&\leq \epsilon ||{\rm div} \tilde{E}^\lambda_t||^2
 +K_\epsilon \big(|| \tilde{z}^\lambda||^2_{H^2}|| \text{div}\tilde{E}^\lambda_t||^2
+||  \tilde{z}^\lambda_t||^2_{H^2}|| \text{div}\tilde{E}^\lambda||^2+
|| \nabla \tilde{z}^\lambda||^2_{H^1}|| \tilde{E}^\lambda_t||^2_{H^1}\nonumber\\
& \quad +||\nabla \tilde{z}^\lambda_t||^2_{H^1}||\tilde{E}^\lambda||^2_{H^1}\big)
 +K_\epsilon\lambda^4(||\tilde{v}^\lambda_t||^2_{H^2}||\nabla {\rm div} \tilde{E}^\lambda||^2+
||\tilde{v}^\lambda||^2_{H^2}||\nabla {\rm div} \tilde{E}^\lambda_t||^2)\nonumber\\
& \leq \epsilon ||{\rm div} \tilde{E}^\lambda_t||^2
+K_\epsilon|||\tilde{\mathbf{w}}^\lambda |||^2( || \text{div}\tilde{E}^\lambda_t||^2+
||  \tilde{z}^\lambda_t||^2_{H^2}+||  \tilde{E}^\lambda_t||^2_{H^1}+
||\nabla \tilde{z}^\lambda_t||^2_{H^1}\big)
\nonumber\\
&\quad + K_\epsilon\lambda^2|||\tilde{\mathbf{w}}^\lambda |||^2\big(
 ||\tilde{z}^\lambda_t||^2_{H^2}+
||\nabla \text{div}\tilde{E}^\lambda_t||^2\big)
\end{align}
Putting \eqref{eb56}-\eqref{eb58} together, using the positivity of $Z$,  and taking $
\epsilon$ small enough, we get
 \begin{align}\label{eb59}
&{\lambda^2}\frac{d}{dt}||{\rm div}\tilde{E}^\lambda_t||^2+2\lambda^2||\nabla {\rm div}\tilde{E}^\lambda_t||^2
+c_{11}\int   |{\rm div}\tilde{E}^\lambda_t|^2 dx\nonumber\\
&\leq   K
||(\tilde{E}^\lambda, \tilde{E}^\lambda_t, {\rm
div}\tilde{E}^\lambda)||^2 +
K_\epsilon||(\tilde{z}^\lambda,\tilde{z}^\lambda_t, \nabla
\tilde{z}^\lambda, \nabla \tilde{z}^\lambda_t)||^2
\nonumber\\
& \quad +K  ||(\tilde{v}^\lambda,\tilde{v}^\lambda_t)||^2
+K  \lambda^4||(\tilde{v}^\lambda,\tilde{v}^\lambda_t)||^2
+K  \lambda^4||(\nabla{\rm div}\tilde{E}^\lambda,\nabla{\rm div}\tilde{E}^\lambda_t)||^2\nonumber\\
&\quad  +K |||\tilde{\mathbf{w}}^\lambda |||^2( || \text{div}\tilde{E}^\lambda_t||^2+
||  \tilde{z}^\lambda_t||^2_{H^2}+||  \tilde{E}^\lambda_t||^2_{H^1}+
||\nabla \tilde{z}^\lambda_t||^2_{H^1}\big)
\nonumber\\
&\quad + K \lambda^2|||\tilde{\mathbf{w}}^\lambda |||^2\big(
 ||\tilde{z}^\lambda_t||^2_{H^2}+
||\nabla \text{div}\tilde{E}^\lambda_t||^2\big)+K
\lambda^4.
 \end{align}

 Differentiating \eqref{ea24} with respect to $t$, multiplying
the resulting equation by $-\Delta \tilde{v}^\lambda_t$,
  integrating it over $\mathbb{T}^3$ with respect to $x$ and using $\text{div} \tilde{v}^\lambda_t=0$, we get
\begin{align}\label{eb60}
& \ \ \frac12 \frac{d}{dt}||\nabla \tilde{v}^\lambda_t||^2+\mu ||\Delta\tilde{v}^\lambda_t||^2\nonumber\\
&\quad =  \int \partial_t( {v}\cdot \nabla \tilde{v}^\lambda)\Delta\tilde{v}^\lambda_t dx
  +\int \partial_t(\tilde{v}^\lambda\cdot \nabla  {v} )\Delta\tilde{v}^\lambda_t dx
  +\int \partial_t(D \tilde{E}^\lambda)\Delta\tilde{v}^\lambda_t dx\nonumber\\
  & \quad\  \ \ -\lambda^2 \int \partial_t (\mathcal{E}{\rm div}\mathcal{E})\Delta\tilde{v}^\lambda_t dx
-\lambda^2 \int \partial_t (\mathcal{E}{\rm div}\tilde{E}^\lambda)\Delta\tilde{v}^\lambda_t dx
  -\lambda^2 \int \partial_t (\tilde{E}{\rm div}\mathcal{E})\Delta\tilde{v}^\lambda_t dx\nonumber\\
  & \quad\  \ \ +
\int \partial_t(\tilde{v}^\lambda\cdot \nabla \tilde{v}^\lambda)\Delta\tilde{v}^\lambda_t dx
-\lambda^2 \int \partial_t (\tilde{E}^\lambda{\rm div}\tilde{E}^\lambda)\Delta\tilde{v}^\lambda_t dx.
   \end{align}
By the Cauchy-Schwartz's inequality and
using the regularity of $ v,D $ and $\mathcal{E}$,  the first six terms on the right-hand side of \eqref{eb60}
can be bounded by
\begin{align}\label{eb61}
& \epsilon ||\Delta\tilde{v}^\lambda_t||^2
+K_\epsilon||(\nabla\tilde{v}^\lambda,\nabla\tilde{v}^\lambda_t,\tilde{v}^\lambda,\tilde{v}^\lambda_t,
 \tilde{E}^\lambda, \tilde{E}^\lambda_t)||^2\nonumber\\
&\quad +K_\epsilon\lambda^4||(  \tilde{E}^\lambda,  \tilde{E}^\lambda_t)||^2
+K_\epsilon\lambda^4||({\rm div} \tilde{E}^\lambda,{\rm div} \tilde{E}^\lambda_t)||^2+K_\epsilon\lambda^4.
 \end{align}
By  Cauchy-Schwartz's inequality, Sobolev's embedding $H^2(\mathbb{T}^3)\hookrightarrow
L^\infty(\mathbb{T}^3)$, and
using the inequality \eqref{eb2},
 the last two nonlinear terms on the right-hand side of \eqref{eb60} can be treated as follows
\begin{align}\label{eb62}
&  \int \partial_t(\tilde{v}^\lambda\cdot \nabla \tilde{v}^\lambda)\Delta\tilde{v}^\lambda_t dx
-\lambda^2 \int \partial_t (\tilde{E}^\lambda{\rm div}\tilde{E}^\lambda)\Delta\tilde{v}^\lambda_t dx\nonumber\\
& \leq \epsilon ||\Delta \tilde{v}^\lambda_t||^2
+K_\epsilon||\partial_t(\tilde{v}^\lambda\cdot \nabla \tilde{v}^\lambda)||^2
+K_\epsilon \lambda^4||\partial_t(\tilde{E}^\lambda\text{div}\tilde{E}^\lambda)||^2\nonumber\\
&\leq \epsilon ||\Delta \tilde{v}^\lambda_t||^2
+K_\epsilon(|| \tilde{v}^\lambda_t||^2_{H^1}||\nabla\tilde{v}^\lambda||^2_{H^1}
+||  \tilde{v}^\lambda||^2_{L^\infty}||\nabla\tilde{v}^\lambda_t||^2)\nonumber\\
  &  \ \ +K_\epsilon \lambda^4(||
\tilde{E}^\lambda_t||^2_{H^1}||
\text{div}\tilde{E}^\lambda||^2_{H^1}+|| \tilde{E}^\lambda||^2_{L^\infty}||  \text{div}\tilde{E}^\lambda_t||^2)\nonumber\\
&\leq \epsilon ||\Delta \tilde{v}^\lambda_t||^2
+K_\epsilon(|| \tilde{v}^\lambda_t||^2_{H^1}||\nabla\tilde{v}^\lambda||^2_{H^1}
+||  \tilde{v}^\lambda||^2_{H^2}||\nabla\tilde{v}^\lambda_t||^2)\nonumber\\
  &  \ \ +K_\epsilon \lambda^4(||
\tilde{E}^\lambda_t||^2_{H^1}||
\text{div}\tilde{E}^\lambda||^2_{H^1}+|| \tilde{E}^\lambda||^2_{H^2}||  \text{div}\tilde{E}^\lambda_t||^2)\nonumber\\
&\leq \epsilon ||\Delta \tilde{v}^\lambda_t||^2
+K_\epsilon|||\tilde{\mathbf{w}}^\lambda |||^4
 .
  \end{align}
Thus, by putting  \eqref{eb60}-\eqref{eb62} together and taking
$\epsilon$ to be small enough,   we obtain
\begin{align}\label{eb63}
&   \frac{d}{dt}||\nabla \tilde{v}^\lambda_t||^2+\mu c_{12} ||\Delta\tilde{v}^\lambda_t||^2\nonumber\\
&\leq  K ||(\nabla\tilde{v}^\lambda,\nabla\tilde{v}^\lambda_t,\tilde{v}^\lambda,\tilde{v}^\lambda_t,
 \tilde{E}^\lambda, \tilde{E}^\lambda_t)||^2
+K \lambda^4||({\rm div} \tilde{E}^\lambda,{\rm div} \tilde{E}^\lambda_t)||^2\nonumber\\
&\quad +K \lambda^4||(  \tilde{E}^\lambda,  \tilde{E}^\lambda_t)||^2
+ K  |||\tilde{\mathbf{w}}^\lambda
|||^4 +K \lambda^4.
\end{align}

Combining \eqref{eb55}, \eqref{eb59} and \eqref{eb63},   and restricting $\lambda$ is small, we get
\begin{align}\label{eb64}
&\quad    \frac{d}{dt}\Big(\delta_7||\nabla \tilde{z}^\lambda_t||^2+\lambda^2||{\rm div}\tilde{E}^\lambda_t||^2
+\delta_8||\nabla \tilde{v}^\lambda_t||^2\Big)
 + \delta_7c_{10}||\Delta \tilde{z}^\lambda_t||^2+\delta_8\mu c_{12}||\Delta \tilde{v}^\lambda_t||^2\nonumber\\
&\quad     + (2\lambda^2 -K\lambda^4-\delta_7K\lambda^4)||\nabla {\rm div}\tilde{E}^\lambda_t||^2
+(c_{12}-\delta_7K-K\lambda^4(\delta_7+\delta_8))||{\rm div}\tilde{E}^\lambda_t||^2\nonumber\\
&  \leq K_4 ||(\tilde{z}^\lambda, \tilde{z}^\lambda_t, \nabla \tilde{z}^\lambda,\nabla \tilde{z}^\lambda_t)||^2
+K_4 ||(\tilde{v}^\lambda, \tilde{v}^\lambda_t, \nabla \tilde{v}^\lambda,\nabla \tilde{v}^\lambda_t)||^2\nonumber\\
&\quad     +K_4 ||(\tilde{E}^\lambda, \tilde{E}^\lambda_t, {\rm div} \tilde{E}^\lambda,\nabla {\rm div}\tilde{E}^\lambda)||^2
+K_4|||\tilde{\mathbf{w}}^\lambda|||^4\nonumber\\
&\quad
+K_4|||\tilde{\mathbf{w}}^\lambda|||^2
\big(
|| \tilde{z}^\lambda_t||^2_{H^2}+||\nabla\tilde{z}^\lambda_t||^2_{H^1}+ || \tilde{E}^\lambda_t||^2_{H^1}
+||{\rm div}\tilde{E}^\lambda_t||^2\big)\nonumber\\
&\quad    +K_4\lambda^2|||\tilde{\mathbf{w}}^\lambda|||^2
\big(
|| \tilde{E}^\lambda_t||^2_{H^2}+||{\rm div}\tilde{E}^\lambda_t||^2_{H^1}+
||\nabla{\rm div}\tilde{E}^\lambda_t||^2
 \big)+K_4\lambda^4,
\end{align}
for some $\delta_7$ and $ \delta_8$ sufficient small, which give the inequality \eqref{eb51}.
\end{proof}

\section{Proof of Theorem \ref{th}} \label{S3}
In this section, we  will use the energy estimates obtained in Section \ref{S2} to
establish the   entropy production integration inequality and compete the proof of our main result.
First, under the assumption of Theorem \ref{th}, by the standard elliptic regularity estimates, we have
\begin{align}
  ||\tilde{z}^\lambda||^2_{H^2}&  \leq K (||\tilde{z}^\lambda||^2+||\Delta \tilde{z}^\lambda||^2),\label{ec1}\\
  ||\tilde{z}^\lambda_t||^2_{H^2}&\leq K (||\tilde{z}^\lambda_t||^2+||\Delta \tilde{z}^\lambda_t||^2),\label{ec2}\\
  ||\tilde{v}^\lambda||^2_{H^2}&  \leq K (||\tilde{v}^\lambda||^2+||\Delta \tilde{v}^\lambda||^2),\label{ec3}\\
  ||\tilde{v}^\lambda_t||^2_{H^2}&\leq K (||\tilde{v}^\lambda_t||^2+||\Delta \tilde{v}^\lambda_t||^2),\label{ec4}\\
  ||\tilde{E}^\lambda||^2_{H^s}&  \leq K (||\tilde{E}^\lambda||^2+||{\rm div} \tilde{E}^\lambda||^2_{H^{s-1}}),s=1,2,\label{ec5}\\
  ||\tilde{E}^\lambda_t||^2_{H^s}&  \leq K (||\tilde{E}^\lambda_t||^2+||{\rm div} \tilde{E}^\lambda_t||^2_{H^{s-1}}),s=1,2.\label{ec6}
\end{align}

By the definitions of $\Gamma^\lambda (t)$ and
$|||\tilde{\mathbf{w}}^\lambda(t)|||$
 (see the definitions \eqref{es15}  and \eqref{eb1}
 above)
 and using the inequalities \eqref{ec1}-\eqref{ec6}, it is easy
to verify that there exist two constants $K_1$ and $K_2$, independent of $\lambda$, such that
\begin{equation}\label{ec7}
  K_1|||\tilde{\mathbf{w}}^\lambda(t)|||^2\leq \Gamma^\lambda (t)\leq K_2 |||\tilde{\mathbf{w}}^\lambda(t)|||^2.
\end{equation}

Using the inequalities
\eqref{eb3},\eqref{eb20},\eqref{eb35},\eqref{eb36}, and
\eqref{eb51}, we can obtain the new inequality  $[\eqref{eb3}+\delta
\eqref{eb20}+\delta^2
\eqref{eb35}]+\delta^3[\delta\eqref{eb36}+\eqref{eb51}]$. By taking
$\delta$ small enough, restricting $\lambda$ sufficient small, and a
tedious but straightforward computation, we obtain the following
relative entropy production integration inequality
\begin{align}\label{ec9}
 \quad \Gamma^\lambda(t)+ K\int^t_0G^\lambda (s)ds
 &\leq K\bar\Gamma^\lambda(t=0)+ K  (\Gamma^\lambda(t))^2+K\lambda^4+ K\int^t_0  \Gamma^\lambda(s)G^\lambda(s) ds\nonumber\\
& +K \int^t_0
\big\{\Gamma^\lambda(s)+(\Gamma^\lambda(s))^2\big\}(s)ds
  ,
\end{align}
where $G^\lambda(t)$ is defined  by \eqref{es16} and
\begin{align}\label{ec10}
  \bar\Gamma^\lambda(t=0)= &\big[||\tilde{z}^\lambda||^2+||\tilde{v}^\lambda||^2 +\lambda^2||\tilde{E}^\lambda||^2
  + ||\tilde{z}^\lambda_t||^2 +||\tilde{v}^\lambda_t||^2 +\lambda^2||\tilde{E}^\lambda_t||^2\big](t=0)\nonumber\\
  &+ (||\nabla \tilde{z}^\lambda_t||^2+\lambda^2||{\rm
div}\tilde{E}^\lambda_t||^2
+ ||\nabla \tilde{v}^\lambda_t||^2)(t=0).
\end{align}

The inequality \eqref{ec9} is a generalized Gronwall's type with an extra integration term, we have the following result.
 \begin{lem}\label{L7}
Suppose that
 \begin{equation}\label{ec11}
 \bar\Gamma^\lambda(t=0)\leq \bar K \lambda^2,
\end{equation}
where $\bar K$ is a positive constant, independent of $\lambda$.  Then
for any $T\in (0,T_{\max})$, $T_{\max}\leq +\infty$, there exists a positive constant $\lambda_0\ll 1$ such that for any
$\lambda\leq \lambda_0$ the inequality
\begin{align}\label{ec12}
\Gamma^\lambda(t)\leq \bar K \lambda^{2-\sigma}
\end{align}
holds for any $\sigma \in (0,2)$ and $0\leq t\leq T$.
 \end{lem}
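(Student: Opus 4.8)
The plan is to run a continuity (bootstrap) argument on the generalized Gronwall inequality \eqref{ec9}. Fix $T\in(0,T_{\max})$ and $\sigma\in(0,2)$, with $\bar K$ the constant in \eqref{ec11}, and set
\[
  T^{*}:=\sup\bigl\{\,t\in[0,T]\ :\ \Gamma^{\lambda}(s)\le \bar K\lambda^{2-\sigma}\ \text{for every }s\in[0,t]\,\bigr\}.
\]
Since the solutions involved are smooth, $t\mapsto\Gamma^{\lambda}(t)$ is continuous. Combining \eqref{ec11} with \eqref{ec7} and with the elliptic regularity estimates \eqref{ec1}--\eqref{ec6} applied at $t=0$ (together with the explicit initial data \eqref{ea26}, which in particular forces $\tilde E^{\lambda}(0)=0$) gives $\Gamma^{\lambda}(0)\le C_{0}\lambda^{2}$ with $C_{0}$ independent of $\lambda$; hence $\Gamma^{\lambda}(0)<\bar K\lambda^{2-\sigma}$ for small $\lambda$, so $T^{*}>0$. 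It then suffices to show that $T^{*}=T$ for all $\lambda$ below a suitable threshold $\lambda_{0}$.

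On $[0,T^{*}]$ I would use the bootstrap bound $\Gamma^{\lambda}\le\bar K\lambda^{2-\sigma}$ to dispose of the super-linear terms on the right of \eqref{ec9}. Indeed, $K(\Gamma^{\lambda}(t))^{2}\le K\bar K\lambda^{2-\sigma}\Gamma^{\lambda}(t)\le\tfrac12\Gamma^{\lambda}(t)$ once $K\bar K\lambda^{2-\sigma}\le\tfrac12$, so this term moves to the left; likewise $K\int_{0}^{t}\Gamma^{\lambda}G^{\lambda}\,ds\le K\bar K\lambda^{2-\sigma}\int_{0}^{t}G^{\lambda}\,ds\le\tfrac K2\int_{0}^{t}G^{\lambda}\,ds$ is absorbed into the dissipation term $K\int_{0}^{t}G^{\lambda}\,ds$ on the left; $K\int_{0}^{t}(\Gamma^{\lambda})^{2}\,ds\le K\bar K\lambda^{2-\sigma}\int_{0}^{t}\Gamma^{\lambda}\,ds\le K\int_{0}^{t}\Gamma^{\lambda}\,ds$; and $K\lambda^{4}\le K\bar K\lambda^{2}$ for $\lambda$ small. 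After these absorptions \eqref{ec9} reduces, for $t\in[0,T^{*}]$, to
\[
  \Gamma^{\lambda}(t)+\tfrac{K}{2}\!\int_{0}^{t}\!G^{\lambda}(s)\,ds\ \le\ C_{1}\bar K\lambda^{2}+C_{2}\!\int_{0}^{t}\!\Gamma^{\lambda}(s)\,ds,
\]
with $C_{1},C_{2}$ independent of $\lambda$ and of $T$. Dropping the nonnegative dissipation integral and invoking the classical Gronwall inequality yields $\Gamma^{\lambda}(t)\le C_{1}\bar K e^{C_{2}T}\lambda^{2}=:C_{*}\lambda^{2}$ on $[0,T^{*}]$.

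The decisive observation is that $C_{*}\lambda^{2}$ is \emph{strictly below} the bootstrap hypothesis $\bar K\lambda^{2-\sigma}$ for small $\lambda$: since $C_{*}\lambda^{2}=C_{*}\lambda^{\sigma}\cdot\lambda^{2-\sigma}$, one has $C_{*}\lambda^{2}<\bar K\lambda^{2-\sigma}$ as soon as $C_{*}\lambda^{\sigma}<\bar K$, i.e. $\lambda<(\bar K/C_{*})^{1/\sigma}=(C_{1}e^{C_{2}T})^{-1/\sigma}$, a threshold independent of $\bar K$. Taking $\lambda_{0}$ smaller than this and than the finitely many thresholds used in the absorptions above, we obtain $\Gamma^{\lambda}(t)<\bar K\lambda^{2-\sigma}$ on all of $[0,T^{*}]$ whenever $\lambda<\lambda_{0}$. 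If $T^{*}<T$ held, continuity of $\Gamma^{\lambda}$ would force $\Gamma^{\lambda}(T^{*})=\bar K\lambda^{2-\sigma}$, contradicting this strict inequality; hence $T^{*}=T$, which proves \eqref{ec12} (and in fact the stronger bound $\Gamma^{\lambda}(t)\le C_{*}\lambda^{2}$).

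The main obstacle is precisely this closing step. The argument would \emph{not} go through with the more natural claim $\Gamma^{\lambda}\le\bar K\lambda^{2}$, because the Gronwall amplification factor $C_{1}e^{C_{2}T}$ generally exceeds one; the weaker exponent $2-\sigma$ is introduced exactly so that the slack $\lambda^{-\sigma}$ can beat this factor, at the price of a threshold $\lambda_{0}$ depending on $T$ and $\sigma$ that degenerates as $\sigma\to0$ or $T\to+\infty$. A secondary technical point is the initial-time step $\Gamma^{\lambda}(0)\le C_{0}\lambda^{2}$: here one must convert the hypothesis \eqref{ec11} on $\bar\Gamma^{\lambda}(0)$ into a bound on the full functional $\Gamma^{\lambda}(0)$ by re-expressing the higher spatial derivatives at $t=0$ through the equations \eqref{ea22}--\eqref{ea24} and the elliptic estimates \eqref{ec1}--\eqref{ec6}, checking that the quartic contributions entering there are quadratically small and hence absorbable.
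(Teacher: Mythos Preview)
Your bootstrap argument is correct and is the standard route for this type of generalized Gronwall inequality; the paper itself omits the proof and simply refers to Lemma~10 of \cite{HW06}, whose argument is essentially the one you wrote. Your identification of the role of $\sigma$ (providing the slack $\lambda^{-\sigma}$ that beats the Gronwall amplification $C_{1}e^{C_{2}T}$) and of the initial-time technicality (upgrading the hypothesis on $\bar\Gamma^{\lambda}(0)$ to a bound on the full $\Gamma^{\lambda}(0)$ via \eqref{eb35}--\eqref{eb36} and the data \eqref{ea26}) are both on point.
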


 Since the  proof of  Lemma \ref{L7} is similar to that of Lemma 10
  in \cite{HW06}, we omit it here and continue our proof of
Theorem \ref{th}. In order to apply Lemma \ref{L7}, we need to verify \eqref{ec11}. In fact,
by the assumptions \eqref{ea26} on the initial data $(n^\lambda_0,
 p^\lambda_0, v^\lambda_0)$, we get $\tilde{E}^\lambda(t=0)=0$ since the solution involved here is smooth,
in particular, the solution  and its derivatives are continuous with respect to
 $x$ and $t$. Then, by using  the assumption \eqref{ea26},  $\tilde{E}^\lambda(t=0)=0$,  the continuity of the solution and its derivatives,
 and the equations \eqref{ea22}-\eqref{ea25}, we get
 \begin{align*}
  &\big[||\tilde{z}^\lambda||^2+||\tilde{v}^\lambda||^2
  + ||\tilde{z}^\lambda_t||^2 +||\tilde{v}^\lambda_t||^2 +\lambda^2||\tilde{E}^\lambda_t||^2\big](t=0)\nonumber\\
  &+ (||\nabla \tilde{z}^\lambda_t||^2+\lambda^2||{\rm
div}\tilde{E}^\lambda_t||^2
+ ||\nabla \tilde{v}^\lambda_t||^2)(t=0)\leq \bar K \lambda^2,
 \end{align*}
which gives the inequality \eqref{ec11}. Thus, by Lemma \ref{L7},
 the inequality  \eqref{ec12} holds. We easily get  the estimate \eqref{ea27} by the definition of
 $\Gamma^\lambda(t)$, the inequality \eqref{ec12},
  and the transform \eqref{ea21},
  which complete the proof of Theorem \ref{th}.

\vskip 5mm {\bf Acknowledgements}\ \ The author would like to
express his gratitude to  Dr. Jishan Fan for his   valuable
suggestions and careful reading of the first draft of this paper.
This work is  supported   by the National Natural Science Foundation
of China  (Grant   10501047).


\end{document}